\newtheorem{theorem}{Theorem}[section]
\newtheorem{lemma}[theorem]{Lemma}
\newtheorem{corollary}[theorem]{Corollary}
\newtheorem{proposition}[theorem]{Proposition}
\theoremstyle{definition}
\theoremstyle{remark}
\numberwithin{equation}{section}
\begin{document}

\title{Embeddings of $P(\omega)/{\rm Fin}$ into Borel Equivalence Relations between $\ell_p$ and $\ell_q$}

\author{Zhi Yin}
\address{School of Mathematics, Nankai University, Tianjin, 300071, P.R.China}
\email{will.yin@hotmail.com}
\thanks{Research partially supported by the National Natural Science Foundation of China(Grant No. 11071129).}


\keywords{Equivalence relation, Borel reducibility, $\ell_p$, $P(\omega)/{\rm Fin}$}

\subjclass[2010]{Primary 03E15; Secondary 46B45}

\date{\today}


\begin{abstract}
We prove that, for $1 \le p<q<\infty$, the partially  ordered set $P(\omega)/{\rm Fin}$ can be embedded into Borel equivalence relations between $\mathbb{R}^\omega/\ell_p$ and $\mathbb{R}^\omega/\ell_q$. Since there is an antichain of size continuum in $P(\omega)/{\rm Fin}$, therefore there are continuum many incomparable Borel equivalence relations between $\mathbb{R}^\omega/\ell_p$ and $\mathbb{R}^\omega/\ell_q$.
\end{abstract}

\maketitle
\section{Introduction}

A topological space is \emph{Polish} if it is separable and completely metrizable. Let $X,~ Y$ be Polish spaces and $E,~F$ equivalence relations on $X, Y$ respectively. If there is a Borel map $\theta: X \to Y$ such that
\[
x E y \Longleftrightarrow \theta(x) F \theta(y)
\]
for all $x,y \in X$,  we say that $E$ is \emph{Borel reducible} to $F$, denoted  $E \le_B F$. If $E \le_B F$ but  $F \nleq_B E$, we say $E$ is \emph{strictly Borel reducible} to $F$, denoted $E<_BF$. If $E \le_B F$ and $F \le_B E$ , we say $E$ and $F$ are \emph{Borel equivalent} to each other, denoted $E \sim_B F$. If $E \nleq_B F$ and $F \nleq_B E$, we say $E$ and $F$ are \emph{Borel incomparable}. For more details about Borel reduction, one can see \cite{GB}.

Dougherty and  Hjorth \cite{Hj} showed that, for $1 \le p <q <\infty $, $\mathbb{R}^\omega/\ell_p <_B \mathbb{R}^\omega/\ell_q$. For $0<p \le 1$, $\mathbb{R}^\omega/\ell_p \sim_B \mathbb{R}^\omega/\ell_1$, a proof  of it can be found in   \cite{Ding}. All of $\ell_p (p \ge 1)$ are Borel reducible to $\mathbb{R}^\omega/\ell_\infty$ by  Rosendal \cite{Rosend}.

A question of  Gao \cite{GA} asking whether for $1 \le p <\infty$, $\mathbb{R}^\omega/\ell_p$ is the greatest lower bound of $\{\mathbb{R}^\omega/\ell_q: p<q\}$. Let $f: [0,1] \to \mathbb{R}^+$ be an arbitrary function,  M\'{a}trai  \cite{Ma} considered a kind of  $\ell_p$-like relation $\mathbf{E}_f$'s on $[0,1]^\omega$ by setting, for every $(x_n)_{n<\omega} , ~(y_n)_{n<\omega} \in [0,1]^\omega$,
\[
(x_n)\mathbf{E}_f (y_n) \Longleftrightarrow \sum_{n<\omega} f(|y_n-x_n|)< \infty.
\]
He studied the Borel reducibility of Borel equivalence relations of the form $\mathbf{E}_f$, and
 answer Gao's question in the negative by showing, for $1 \le p<q<\infty$, every linear subset of $P(\omega)/{\rm Fin}$ can be embedded into the set of equivalence relations between $\mathbb{R}^\omega/\ell_p$ and $\mathbb{R}^\omega/\ell_q$(See \cite{Ma}, Corollary 31).

 Another kind of $\ell_p$-like equivalence relations was introduced by Ding \cite{Ding3}.  Let $(X_n, d_n), n<\omega$ be sequence of  pseudo-metric spaces and $p \ge 1$. For $x,y \in \prod_{n<\omega}X_n, ~ (x,y) \in E((X_n, d_n)_{n<\omega}; p)\Leftrightarrow \sum_{n<\omega}d_n(x(n),y(n))^p<\infty$. He showed that Borel reducibility between $E((X_n)_{n<\omega}; p)$ equivalence relations is related to a notion of ``finitely H\"{o}lder($\alpha$) embeddability''. If $X$ is a separable Banach space with  norm $\| \cdot \|_X$ and $(X_n, d_n)=(X, \| \cdot \|_X)$ for each $n<\omega$,  denoted $E((X_n, d_n)_{n<\omega}; p)$ by $E(X, p)$, he presented many results on reducibility and nonreducibility between $E(L_r, p)$'s and $E(c_0, p)$'s for $r,p \in [1, \infty)$. For more details,  see \cite{Ding2}.

A well-known theorem of Parovi\v{c}enko (see e.g. \cite{Bella}) says that, every  Boolean algebra of size $\le \omega_1$ embeds into $P(\omega)/{\rm Fin}$. So assuming  CH (the continuum hypothesis), every Boolean algebra of size at most continuum can be embedded into $P(\omega)/{\rm Fin}$, therefore every partially  ordered set of size at most continuum can be embedded into $P(\omega)/{\rm Fin}$, this indicates $P(\omega)/{\rm Fin}$ is the most complicated partially ordered set of  size at most continuum. What happens if CH fails? We refer to \cite{Baum} for more results. Anyway, we know that there is an antichain of size continuum in $P(\omega)/{\rm Fin}$ under ZFC.

In \cite{Farah}, Farah introduced a family of Borel equivalence relations called $c_0$-equalities. Using a method from Louveau--Velickovic \cite{LV}, he proved that $P(\omega)/{\rm Fin}$ can be embedded into $c_0$-equalities. It follows that  there are continuum many Borel incomparable equivalence relations among them. On the other hand, we can see that all equivalence relations considered in  \cite{Ma} are pairwise Borel comparable. By contrast, for $1\le p <q <\infty$, several questions arise naturally:

\begin{enumerate}
\item[(i)]  Are there continuum many Borel incomparable equivalence relations between   $\mathbb{R}^\omega/\ell_p$ and $\mathbb{R}^\omega/\ell_q$?
\item[(ii)]  Does  $P(\omega)/{\rm Fin}$ can be embedded into equivalence relations between $\mathbb{R}^\omega/\ell_p$ and $\mathbb{R}^\omega/\ell_q$?
\end{enumerate}

We show that the answer for question (ii) is affirmative. In fact, we give a more stronger result.
\begin{theorem}
For $1 \le p <\infty$ and $U \in P(\omega)$, there is a Borel equivalence relation $\mathbf{E}_U$ such that  for any $q>p$, $\mathbb{R}^\omega/\ell_p \le_B \mathbf{E}_U \le_B \mathbb{R}^\omega/\ell_q$ and for $U, V \in P(\omega)$, we have
\[
U \subseteq^{*}V \Longleftrightarrow \mathbf{E}_U \le_B \mathbf{E}_V.
\]
\end{theorem}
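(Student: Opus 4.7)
The plan is to define $\mathbf{E}_U$ as a block-wise mixture of $\ell_p$- and $\ell_{q_k}$-summability conditions, in the spirit of M\'atrai's $\mathbf{E}_f$-relations but with the set parameter $U$ encoded block by block rather than through a single scalar function. Fix once and for all a partition $\omega=\bigsqcup_{k<\omega}I_k$ into finite blocks whose sizes grow fast enough for the estimates below, and a strictly decreasing sequence $q_0>q_1>\cdots$ with $q_k\downarrow p$. On $[0,1]^\omega$ declare
\[
x\,\mathbf{E}_U\,y \iff \sum_{k\in U}\sum_{n\in I_k}|x_n-y_n|^{q_k}+\sum_{k\notin U}\sum_{n\in I_k}|x_n-y_n|^p<\infty.
\]
Since every exponent is at least $1$, the inequality $|a+b|^r\le 2^{r-1}(|a|^r+|b|^r)$ applied block by block makes $\mathbf{E}_U$ a Borel equivalence relation on $[0,1]^\omega$, which can be identified with $\mathbb{R}^\omega$ Borel-isomorphically via a standard coordinate-wise rescaling when needed.

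For the sandwich $\mathbb{R}^\omega/\ell_p\le_B\mathbf{E}_U\le_B\mathbb{R}^\omega/\ell_q$, the upper bound exploits $q_k\downarrow p<q$: all but finitely many $q_k$ lie below $q$, so on bounded coordinates $|x_n-y_n|^{q_k}\le|x_n-y_n|^q$, and $\ell_q$-summability of differences dominates every block sum, while the remaining finitely many blocks are handled by boundedness. For the lower bound I would adapt the Dougherty--Hjorth reduction $\mathbb{R}^\omega/\ell_p\le_B\mathbb{R}^\omega/\ell_{q_k}$ applied block by block on the $U$-blocks, together with a damped identity on the $U^c$-blocks. The forward direction $U\subseteq^{\ast}V\Rightarrow\mathbf{E}_U\le_B\mathbf{E}_V$ follows by the same kind of block-local surgery: on each $k\in V\setminus U$ apply a H\"older-type coordinate change that transports $\ell_p$-summability into $\ell_{q_k}$-summability; leave blocks in $U\cap V$ and $U^c\cap V^c$ alone; and absorb the finitely many bounded blocks of $U\setminus V$ into a harmless additive constant.

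The main obstacle is the converse, $U\not\subseteq^{\ast}V\Rightarrow\mathbf{E}_U\not\le_B\mathbf{E}_V$. I would prove this by a Dougherty--Hjorth/M\'atrai-style ergodicity argument applied block by block. Pick an infinite $W\subseteq U\setminus V$ and, toward a contradiction, fix a Borel reduction $\theta$. A Baire-category stabilization of $\theta$ on the product topology of the source blocks $(I_k)_{k\in W}$ associates to each such block a finite window of target blocks on which $\theta$ essentially depends near a comeager base point. Because $W\cap V=\emptyset$, the $\mathbf{E}_V$-condition restricted to every such target window is an $\ell_p$-sum, and this strictly finer target constraint cannot absorb the $\ell_{q_k}$-freedom that $\mathbf{E}_U$ affords on the source block $I_k$ (since $k\in U$); exploiting the gap between $p$ and $q_k$ along $W$ produces pairs $(x,y)\in\mathbf{E}_U$ whose images fall outside $\mathbf{E}_V$, contradicting the reduction. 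This is the block-by-block amplification of the obstruction behind $\mathbb{R}^\omega/\ell_p<_B\mathbb{R}^\omega/\ell_q$ in Dougherty--Hjorth, and making the localization fully rigorous is the technical heart of the proof.
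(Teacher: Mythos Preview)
Your construction is genuinely different from the paper's and, as written, has gaps I do not see how to close. The paper does not mix exponents block by block; it stays entirely inside M\'atrai's $\mathbf{E}_f$ framework. For each $U$ it builds a single continuous increasing $\varphi_U:[0,1]\to\mathbb{R}^+$ via a step sequence $u_U(n)$ that drops by fixed factors $\delta_m$ at prescribed indices $k_m$ exactly when $m$ lies in a block $I_l$ with $l\in U$, sets $f_U(x)=x^p\varphi_U(x)$, and puts $\mathbf{E}_U=\mathbf{E}_{f_U}$. Reducibility for $U\subseteq^*V$ is obtained by verifying the quantitative hypotheses of M\'atrai's reduction criterion (his Theorem~16): one constructs an auxiliary $\mu(n)$ with $\mu(n)^\alpha=u_U(n)/u_V(n)-u_U(n-1)/u_V(n-1)$ and checks three growth estimates; this is the long Case~2 of the paper's Theorem~4.1. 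Non-reducibility uses M\'atrai's Theorem~18 with the weakened $\liminf$ condition $(\mathrm{A}_2)'$. The sandwich comes from $\mathbf{E}_{f_\emptyset}=\ell_p$ and a separate Lemma~3.6 showing $\mathbf{E}_{f_\omega}\le_B\ell_q$.

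In your scheme the forward reduction $U\subseteq^*V\Rightarrow\mathbf{E}_U\le_B\mathbf{E}_V$ breaks. On a block $k\in V\setminus U$ you propose a ``H\"older-type coordinate change that transports $\ell_p$-summability into $\ell_{q_k}$-summability''; but on a \emph{finite} block all $\ell_r$-norms are equivalent, so no local coordinate change can create the iff you need---what matters is the tail over infinitely many blocks. The genuine Dougherty--Hjorth reduction $\ell_p\le_B\ell_{q_k}$ is not block-diagonal (it spreads each source coordinate over infinitely many target coordinates) and its constants degenerate as $q_k\downarrow p$, so there is no evident way to splice these into a single Borel reduction. The same objection hits your lower bound $\ell_p\le_B\mathbf{E}_U$ on the $U$-blocks. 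Your non-reducibility sketch also has a gap: you assert ``because $W\cap V=\emptyset$, the $\mathbf{E}_V$-condition restricted to every such target window is an $\ell_p$-sum,'' but $W$ indexes \emph{source} blocks, and Baire localization gives no control over which \emph{target} blocks the reduction lands in---those may well lie in $V$, where the exponent is $q_{k'}$ rather than $p$, and then the tension you want to exploit evaporates. (Minor: your inequality $|x_n-y_n|^{q_k}\le|x_n-y_n|^q$ for $q_k<q$ on $[0,1]$ is reversed.)
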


Since there is an antichain of size continuum in $P(\omega)/{\rm Fin}$, we have the following corollary:
\begin{corollary}
For  $1 \le p <q<\infty$, there is a set of Borel equivalence relations
\[
\{\mathbf{E}_\xi:  \xi \in \{0,1\}^\omega\}
\]
such that $\mathbb{R}^\omega/\ell_p \le_B \mathbf{E}_{\xi} \le_B \mathbb{R}^\omega/\ell_q$, and for distinct $\xi,\zeta \in \{0,1\}^\omega$, we have $\mathbf{E}_{\xi}$ and $\mathbf{E}_{\zeta}$ are Borel incomparable.
\end{corollary}

{\it Notions.} Denote  $\mathbb{R}^+$ the set of nonnegative real numbers and  ${\rm Id}^\alpha$ the function $x^\alpha(x\in[0,1])$.  ${\rm Fin}$  stands for the collection of all finite sets of $\omega$. For $U, ~V \in P(\omega)$, we use $U \subseteq^{*} V$ to denote $U\backslash V \in {\rm Fin}$.

\section{Equivalence of functions and sequences}

 Let $f,g$ be two functions on $\mathbb{R}$ and $\mathbb{D} \subseteq \mathbb{R}$, we say $f$ and $g$ are \emph{equivalent} on  $\mathbb{D}$ if there is a constant $C \ge 1$ such that $g(x)/C \le f(x) \le Cg(x) $ for every $x \in \mathbb{D}$,  denote by $(f(x))_{x\in \mathbb{D}} \approx (g(x))_{x\in \mathbb{D}}$. Similarly, if $(u_n)_{n<\omega},(v_n)_{n<\omega}$ are two sequences on $\mathbb{R}$ and $U \in P(\omega)$, we say $(u_n)_{n<\omega},(v_n)_{n<\omega}$ are \emph{equivalent} on $U$ if there is a constant $C \ge 1$ such that $u_n/C \le v_n \le Cu_n $ for each $n \in U$, denoted $(u_n)_{n \in U} \approx (v_n)_{n\in U }$. In many cases, equivalence of functions and equivalence of sequences are related, we will give two propositions to illustrate this relationship, and we need the concept of  essentially increasing function firstly.

The definition of essentially increasing function can be found in  \cite{Ma}. Let $\mathbb{D} \subseteq \mathbb{R}$ and $f: \mathbb{D} \to \mathbb{R}$ be a function. We say $f$ is \emph{essentially increasing} on $\mathbb{D}$ if for some $C \ge 1, \forall x, y \in \mathbb{D}(x\le y \Rightarrow f(x) \le Cf(y))$. Equivalently, $f$ is  essentially increasing  on $\mathbb{D}$  if and only if there is an increasing function $g$ on $\mathbb{D}$ such that $(f(x))_{x\in \mathbb{D}} \approx (g(x))_{x\in \mathbb{D}}$. In fact, if $f$ is an essentially increasing function on $\mathbb{D}$, we set $g: \mathbb{D} \to \mathbb{R}, g(x)=\sup \{f(y):  y \le x\}$. It is easy to see that  $g$ satisfies the requirements.

\begin{proposition}\label{funseq1}
Let $f$ be an essentially increasing function on $[0,1]$ and $(x_n)_{n<\omega}$  a decreasing sequence on $[0,1]$ with $x_0=1$ and $\lim_{n \to \infty}{x_n}=0$. Assume there exists $\delta>0$ such that for each $n<\omega$, $f(x_{n+1})\ge \delta f(x_n)$. Let $g$ be a function on $[0,1]$, and there exists $K \ge 1$ such that for each $n<\omega$ and  $x \in [x_{n+1}, x_n]$, we have
\[
\min\{g(x_{n+1}) , g(x_n)\}/K \le g(x)\le K \max\{g(x_{n+1}), g(x_n)\}.
\]
If $(f(x_n))_{n<\omega} \approx (g(x_n))_{n<\omega}$, then $(f(x))_{x\in(0,1]} \approx (g(x))_{x \in (0,1]}$.
\end{proposition}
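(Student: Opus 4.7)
The plan is to localize the comparison between $f$ and $g$ to each interval $[x_{n+1}, x_n]$, then exploit the hypothesis $f(x_{n+1}) \ge \delta f(x_n)$ to force both $f$ and $g$ to be nearly constant on such intervals; the sequence equivalence at the endpoints then propagates to a function equivalence on the whole of $(0,1]$.

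First, for any $x \in (0,1]$, since $x_0 = 1$ and $x_n \to 0$, there exists $n$ with $x_{n+1} \le x \le x_n$. Using that $f$ is essentially increasing, say with constant $C_1 \ge 1$, I would bound $f(x_{n+1})/C_1 \le f(x) \le C_1 f(x_n)$. Combined with the quasi-doubling hypothesis $f(x_{n+1}) \ge \delta f(x_n)$, this shows that $f(x)$, $f(x_n)$ and $f(x_{n+1})$ are all equivalent with a constant depending only on $C_1$ and $\delta$.

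Next, the hypothesis $(f(x_n))_{n<\omega} \approx (g(x_n))_{n<\omega}$, with some constant $C_2 \ge 1$, transports the near-constancy of $f$ between consecutive $x_n$'s to $g$: chaining the equivalences gives $g(x_n) \le C_2 f(x_n) \le (C_1 C_2/\delta) f(x_{n+1}) \le (C_1 C_2^2/\delta) g(x_{n+1})$, and symmetrically in the other direction, so $g(x_n)$ and $g(x_{n+1})$ agree up to a factor $M := C_1 C_2^2/\delta$. Consequently $\min\{g(x_{n+1}),g(x_n)\}$ and $\max\{g(x_{n+1}),g(x_n)\}$ differ by at most the factor $M$, so the assumed sandwich inequality $\min\{g(x_{n+1}),g(x_n)\}/K \le g(x) \le K\max\{g(x_{n+1}),g(x_n)\}$ implies $g(x) \approx g(x_n)$ with constant $KM$.

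Combining the two chains $f(x) \approx f(x_n) \approx g(x_n) \approx g(x)$ yields a single constant $C$, depending only on $C_1, C_2, \delta, K$, such that $f(x)/C \le g(x) \le C f(x)$ uniformly in $x \in (0,1]$, as required. The only delicate point is ensuring the constant does not drift with $n$; this is guaranteed precisely by the uniform lower bound $f(x_{n+1}) \ge \delta f(x_n)$, which prevents $f(x_n)$ from decaying too quickly between consecutive terms and is therefore the hypothesis doing the real work.
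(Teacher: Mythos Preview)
Your proof is correct and follows essentially the same approach as the paper: localize to an interval $[x_{n+1},x_n]$, use the essential monotonicity of $f$ together with the quasi-doubling bound $f(x_{n+1})\ge\delta f(x_n)$ to make $f(x)$, $f(x_n)$, $f(x_{n+1})$ mutually comparable, transfer this to $g(x_n)\approx g(x_{n+1})$ via the sequence equivalence, and then invoke the sandwich hypothesis on $g$. The paper inserts the intermediate observation that $g$ is itself essentially increasing on $(0,1]$ before extracting the final bound, whereas you chain the equivalences directly; your route is slightly leaner but the substance is identical.
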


\begin{proof}

By assumption, we can select $C_1 \ge 1$ such that for each $n< \omega$ and $0 \le x \le y \le  1$,
\[
 f(x_n)/C_1 \le  g(x_n) \le C_1 f(x_n), ~ f(x) \le C_1f(y).
\]
Then for each $m \le n<\omega$,
 \[
g(x_m) \ge f(x_m)/C_1 \ge  f(x_n)/(C_1)^2 \ge g(x_n)/(C_1)^3,
\]
\[
g(x_{n+1}) \ge f(x_{n+1})/C_1 \ge \delta/C_1 \cdot f(x_n) \ge \delta/(C_1)^2 \cdot g(x_n).
\]

We show $g$ is essentially increasing  on $(0,1]$. Let $C_2=\max\{K^2(C_1)^3, K^2(C_1)^2/\delta\}$. If $0 \le x \le y \le 1$, then there are $m \le n <\omega$ such that  $x \in [x_{n+1}, x_n]$ and $y \in [x_{m+1}, x_m]$.

If $n>m$, then
\begin{align*}
g(x) & \le  K \max\{g(x_{n+1}), g(x_n)\}\\
 &\le K(C_1)^3 \min\{g(x_{m+1}), g(x_m)\} \le K^2(C_1)^3g(y)\le C_2g(y).
\end{align*}

If $n=m$,  then
\begin{align*}
    g(x) & \le  K \max\{g(x_{n+1}), g(x_n)\}\\
     & \le C_2/K \cdot \min\{g(x_{m+1}), g(x_m)\} \le C_2  g(y).
\end{align*}

For $x>0$, there is $m<\omega$ with $x \in [x_{m+1},x_m]$, therefore
\begin{align*}
g(x) & \ge g(x_{m+1})/C_2 \ge f(x_{m+1})/(C_1C_2)\\
 & \ge \delta f(x_m)/(C_1C_2) \ge \delta/((C_1)^2C_2) \cdot f(x),
\end{align*}
similarly, we have $g(x) \le (C_1)^2C_2/\delta \cdot f(x)$, therefore $(f(x))_{x\in(0,1]} \approx (g(x))_{x \in (0,1]}$.
\end{proof}

\begin{proposition}\label{funseq2}
Let $\alpha >1$ and $(x_n)_{n<\omega}$ be a sequence on $(0,1)$ with  $x_{n+1}=x_n^\alpha$ for each $n<\omega$.
Suppose $\varphi:[0,1] \to \mathbb{R}^+$ is essentially increasing on $[0,1]$  with $\varphi(x_0)>0$. If there is $\lambda>0$ such that $\varphi(x_{n+1}) \ge \lambda \varphi(x_n)$ for each $n<\omega$, then $(\varphi(x))_{x \in [0,1]} \approx (\varphi(x^\alpha))_{x \in [0,1]} $.


\end{proposition}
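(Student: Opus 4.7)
The plan is to apply Proposition \ref{funseq1} with $f=\varphi$ and $g(x):=\varphi(x^{\alpha})$. If its hypotheses can be checked, that proposition directly gives $(f(x))_{x\in(0,1]}\approx(g(x))_{x\in(0,1]}$, which is the desired equivalence on $(0,1]$; extending to $\{0\}$ is trivial, since both sides then equal $\varphi(0)$.

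The one mismatch is that Proposition \ref{funseq1} requires a decreasing sequence with first term $1$, whereas here only $x_0\in(0,1)$ is given. I would therefore work with the prepended sequence $\tilde{x}_0:=1$, $\tilde{x}_n:=x_{n-1}$ for $n\ge 1$; it is still decreasing on $[0,1]$ and tends to $0$. The multiplicative gap $\varphi(\tilde{x}_{n+1})\ge\delta\varphi(\tilde{x}_n)$ follows from the given $\varphi(x_{n+1})\ge\lambda\varphi(x_n)$ for $n\ge 1$, and from $\varphi(x_0)/\varphi(1)>0$ for $n=0$ (this quantity is positive because $\varphi(x_0)>0$ and $\varphi(1)\ge\varphi(x_0)/C>0$ by essential increasing). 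Then take $\delta:=\min\{\varphi(x_0)/\varphi(1),\lambda\}$.

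Next I would verify the local bound on $g$ over each interval $[\tilde{x}_{n+1},\tilde{x}_n]$. Here $x^{\alpha}\in[\tilde{x}_{n+1}^{\alpha},\tilde{x}_n^{\alpha}]$, and essential monotonicity of $\varphi$ with constant $C$ yields the one-sided estimates $g(x)\le Cg(\tilde{x}_n)$ and $g(x)\ge g(\tilde{x}_{n+1})/C$. To upgrade these to the symmetric $\min/\max$ form demanded by Proposition \ref{funseq1}, observe that for $n\ge 1$, $\tilde{x}_n^{\alpha}=x_n$ and $\tilde{x}_{n+1}^{\alpha}=x_{n+1}$, so the gap hypothesis translates to $g(\tilde{x}_{n+1})\ge\lambda g(\tilde{x}_n)$, forcing $g(\tilde{x}_{n+1})$ and $g(\tilde{x}_n)$ to lie within a uniform multiplicative factor of one another; then $K:=\max\{C,C/\lambda\}$ suffices. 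The case $n=0$ is analogous, using $g(\tilde{x}_0)=\varphi(1)$.

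Finally I would check the sequence-level equivalence $(f(\tilde{x}_n))\approx(g(\tilde{x}_n))$: at $n=0$ both sides equal $\varphi(1)$, while for $n\ge 1$ we have $f(\tilde{x}_n)=\varphi(x_{n-1})$ and $g(\tilde{x}_n)=\varphi(x_n)$, which are comparable via $\lambda\varphi(x_{n-1})\le\varphi(x_n)\le C\varphi(x_{n-1})$. With all hypotheses in place, Proposition \ref{funseq1} delivers the conclusion on $(0,1]$. The main delicacy is the passage from the natural one-sided bounds on $g$ to the symmetric $\min/\max$ form; this is precisely where the gap hypothesis $\varphi(x_{n+1})\ge\lambda\varphi(x_n)$ earns its keep, since without it $g(\tilde{x}_{n+1})$ could be arbitrarily small relative to $g(\tilde{x}_n)$ and no uniform $K$ would work.
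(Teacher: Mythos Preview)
Your proof is correct, but it takes a different route from the paper. You reduce to Proposition~\ref{funseq1} by setting $f=\varphi$, $g(x)=\varphi(x^{\alpha})$, and prepending $\tilde{x}_0=1$ to the sequence. The paper instead argues directly: it passes to an equivalent \emph{increasing} $\psi$, notes that for $x\in[x_{m+1},x_m]$ one has $x^{\alpha}\in[x_{m+2},x_{m+1}]$, and chains the gap estimate twice to obtain $\psi(x^{\alpha})\ge\psi(x_{m+2})\ge(\lambda/K^2)^2\psi(x_m)\ge(\lambda/K^2)^2\psi(x)$; the other inequality is immediate from monotonicity of $\psi$. Your approach is more modular and shows nicely that Proposition~\ref{funseq2} is really a corollary of Proposition~\ref{funseq1}; the paper's approach is shorter and makes the role of each hypothesis a bit more transparent.

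One small remark on your write-up: the ``upgrade'' from the one-sided estimates $g(\tilde{x}_{n+1})/C\le g(x)\le Cg(\tilde{x}_n)$ to the $\min/\max$ form of Proposition~\ref{funseq1} is automatic with $K=C$, since $\min\{g(\tilde{x}_{n+1}),g(\tilde{x}_n)\}\le g(\tilde{x}_{n+1})$ and $g(\tilde{x}_n)\le\max\{g(\tilde{x}_{n+1}),g(\tilde{x}_n)\}$; no appeal to the gap hypothesis is needed at that step. The gap hypothesis genuinely enters in the other two places you identified: verifying $f(\tilde{x}_{n+1})\ge\delta f(\tilde{x}_n)$ and establishing the sequence-level equivalence $(f(\tilde{x}_n))\approx(g(\tilde{x}_n))$.
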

\begin{proof}
 Let $\psi:[0,1] \to \mathbb{R}^+$ be an increasing function with $(\varphi(x))_{x \in [0,1]} \approx (\psi(x))_{x\in [0,1]}$, then there is $K \ge 1$ such that $\varphi(x)/K  \le \psi(x)  \le K \varphi(x)$.  For each $n<\omega$,
\[
\psi(x_{n+1}) \ge \varphi(x_{n+1})/K \ge \lambda/K \cdot \varphi(x_n) \ge \lambda/K^2 \cdot \psi(x_n).
\]

Since $\varphi(x_0)>0$ and $\varphi(x)$ is essentially increasing on $[0,1]$ with $\varphi(x_{n+1}) \ge \lambda \varphi(x_n)$ for each $n<\omega$, we have $\varphi(x)>0$ for $x>0$, therefore $\psi(x)>0$ for $x>0$. Set $x_{-1}=1$, if $x>0$, then $x \in [x_{m+1},x_m]$ for some $m \in \omega\cup \{-1\}$.

If $m=-1$, then
\[
\psi(x^\alpha) \ge \psi(x_{1}) \ge \psi(x)\psi(x_1)/\psi(1).
\]

If $m \ge 0$, then
\begin{align*}
\psi(x^\alpha)&\ge \psi(x_{m+2}) \ge \lambda/K^2 \cdot  \psi (x_{m+1}) \\
&\ge \lambda^2/K^4 \cdot \psi(x_m) \ge \lambda^2/K^4 \cdot \psi(x).
\end{align*}

Let $C =\max\{((K^2\psi(1))/\psi(x_1), K^6/\lambda^2\}>0$, then for each $x \in [0,1]$, $\psi(x^\alpha) \ge K^2/C \cdot \psi(x) $, hence $\varphi(x^\alpha) \ge \varphi(x)/C$. Note that for each $x \in [0,1]$,
\[
\varphi(x^\alpha) \le K \psi(x^\alpha) \le K\psi(x) \le K^2 \varphi(x)\le C\varphi(x).
\]
Therefore, $(\varphi(x))_{x \in [0,1]} \approx (\varphi(x^\alpha))_{x \in [0,1]} $.
\end{proof}

Specially,  we have the following corollary.
\begin{corollary}\label{funseq3}
Let $\varphi:[0,1] \to \mathbb{R}^+$ be an essentially increasing function  with $\varphi(1/2)>0$. If there is $\lambda>0$ such that $\varphi(1/2^{2n}) \ge \lambda \varphi(1/2^n)$ for each $n<\omega$, then $(\varphi(x))_{x \in [0,1]} \approx (\varphi(x^2))_{x \in [0,1]} $.
\end{corollary}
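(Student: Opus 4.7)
The plan is to recognize this statement as essentially a specialization of Proposition \ref{funseq2} rather than anything that requires a new argument. Concretely, I would set $\alpha = 2$ and define the sequence $x_n = 1/2^{2^n}$ for $n<\omega$. Then $x_0 = 1/2 \in (0,1)$, and
\[
x_{n+1} = \frac{1}{2^{2^{n+1}}} = \left(\frac{1}{2^{2^n}}\right)^2 = x_n^{\alpha},
\]
so the recursive condition required by Proposition \ref{funseq2} is satisfied, and clearly $x_n \in (0,1)$ for all $n$.

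Next I would verify the hypotheses of Proposition \ref{funseq2} for this particular $\alpha$ and sequence. The function $\varphi$ is essentially increasing on $[0,1]$ by assumption, and $\varphi(x_0) = \varphi(1/2) > 0$ is exactly the positivity hypothesis stated in the corollary. The remaining condition is $\varphi(x_{n+1}) \ge \lambda \varphi(x_n)$ for each $n<\omega$. This is precisely the hypothesis $\varphi(1/2^{2m}) \ge \lambda\varphi(1/2^m)$ of the corollary applied with $m = 2^n$: writing it out,
\[
\varphi(x_{n+1}) = \varphi(1/2^{2\cdot 2^n}) \ge \lambda\,\varphi(1/2^{2^n}) = \lambda\,\varphi(x_n).
\]
Note that the corollary's hypothesis is actually stronger than needed, since it holds for \emph{every} $n<\omega$, whereas only the subsequence $n = 2^k$ is used.

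With all hypotheses checked, Proposition \ref{funseq2} immediately yields $(\varphi(x))_{x\in[0,1]} \approx (\varphi(x^2))_{x\in[0,1]}$, which is the desired conclusion. There is no genuine obstacle here; the only thing to take care of is selecting the right sequence so that the recursion $x_{n+1}=x_n^2$ matches the indices $1/2^n \mapsto 1/2^{2n}$ appearing in the corollary's hypothesis, which is precisely what the doubly-exponential choice $x_n = 1/2^{2^n}$ accomplishes.
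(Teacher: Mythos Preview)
Your proposal is correct and matches the paper's proof essentially verbatim: the paper also sets $x_0=1/2$, $x_{n+1}=x_n^2$, observes $x_n=1/2^{2^n}$, checks $\varphi(x_{n+1})\ge\lambda\varphi(x_n)$ via the hypothesis at $m=2^n$, and invokes Proposition~\ref{funseq2}. Your additional remark that only the values $n=2^k$ of the hypothesis are actually used is accurate and not stated explicitly in the paper.
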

\begin{proof}
Let $x_0=1/2$ and $x_{n+1}=x_n^2 $ for $n<\omega$, then $x_n=1/2^{2^n}$ for each $n<\omega$, therefore
\[
\varphi(x_{n+1})=\varphi(1/2^{2^{n+1}})\ge \lambda\varphi(1/2^{2^n})=\lambda\varphi(x_n).
\]
By Proposition \ref{funseq2}, we get the conclusion.
\end{proof}

\section{$\mathbf{E}_f$  equivalence relations}

Let $f: [0,1] \to \mathbb{R}^+$ be an arbitrary function, M\'{a}trai \cite{Ma} defined the relation $\mathbf{E}_f$ on $[0,1]^\omega$ by setting, for every $(x_n)_{n<\omega} , ~(y_n)_{n<\omega} \in [0,1]^\omega$,
\[
(x_n)\mathbf{E}_f (y_n) \Longleftrightarrow \sum_{n<\omega} f(|y_n-x_n|)< \infty.
\]
It is straightforward that $\mathbf{E}_f$ is a Borel relation whenever $f$ is Borel. If $f, g: [0,1] \to \mathbb{R}^+$ satisfying $(f(x))_{x \in [0,1]} \approx (g(x))_{x \in [0,1]}$, then $\mathbf{E}_f=\mathbf{E}_g$.

For $0<p<\infty$, it is well known that $\mathbb{R}^\omega/\ell_p$   and $[0,1]^\omega/\ell_p=\mathbf{E}_{{\rm Id}^p}$ are Borel equivalent, so we can assume that  $\mathbb{R}^\omega/\ell_p$ belongs to the $\mathbf{E}_f$'s.

The following proposition answers when $\mathbf{E}_f$ is an equivalence relation.

\begin{proposition}[M\'{a}trai \cite{Ma}, Proposition 2]\label{E-f}
Let $f: [0,1] \to \mathbb{R}^+$ be a bounded function. Then $\mathbf{E}_f$ is an equivalence relation if and only if the following conditions hold:
\begin{enumerate}
\item[${\rm(R_1)}$]  $f(0)=0$;

\item[${\rm (R_2)}$] there is a $C \ge 1$ such that for every $x,y \in [0,1]$ with $x+y \in [0,1]$,
\[
f(x+y) \le C(f(x)+f(y)),
\]
\[
f(x) \le C(f(x+y)+f(y)).
\]
\end{enumerate}
\end{proposition}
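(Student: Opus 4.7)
The plan is to verify reflexivity, symmetry, and transitivity of $\mathbf{E}_f$ separately. Symmetry is automatic from $|x_n-y_n|=|y_n-x_n|$, and reflexivity on the constant zero sequence forces $\sum_{n<\omega} f(0)<\infty$, which is equivalent to (R1). It therefore remains to show, given (R1), that transitivity of $\mathbf{E}_f$ is equivalent to (R2).

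For the direction ($\Leftarrow$), the key observation is that since $x_n,y_n,z_n$ lie on the real line, the three distances $a_n=|x_n-y_n|$, $b_n=|y_n-z_n|$, $c_n=|x_n-z_n|$ satisfy the rigid identity that the largest of the three equals the sum of the other two. In the case $c_n=a_n+b_n$, the first inequality of (R2) directly yields $f(c_n)\le C(f(a_n)+f(b_n))$. In the case $a_n=c_n+b_n$ (so $c_n+b_n\le 1$), the second inequality of (R2) applied with $x=c_n$ and $y=b_n$ yields the same bound; the remaining case is symmetric. Summing the resulting pointwise inequality over $n$ transfers convergence of $\sum f(a_n)$ and $\sum f(b_n)$ to convergence of $\sum f(c_n)$, which is exactly transitivity.

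For ($\Rightarrow$), I would contrapose and construct an explicit triple of sequences witnessing failure of transitivity from a failure of (R2). Suppose the first inequality of (R2) fails. Then for each $n$ one can pick $u_n,v_n\in[0,1]$ with $u_n+v_n\le 1$ and $f(u_n+v_n)\ge n^3(f(u_n)+f(v_n))$. The degenerate case where $f(u_n)=f(v_n)=0$ but $f(u_n+v_n)>0$ already breaks transitivity via the constant sequences $(0),(u_n),(u_n+v_n)$, so assume $f(u_n+v_n)>0$. Choose block sizes $k_n=\lceil 1/f(u_n+v_n)\rceil$ and, on disjoint blocks $I_n$ of size $k_n$ partitioning $\omega$, define $a_i=0$, $b_i=u_n$, $c_i=u_n+v_n$ for $i\in I_n$. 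Using boundedness of $f$ by some $M$, one has $k_n f(u_n+v_n)\le 1+M$, so $k_n(f(u_n)+f(v_n))\le(1+M)/n^3$; this forces $\sum_i f(|a_i-b_i|)$ and $\sum_i f(|b_i-c_i|)$ to converge while $\sum_i f(|a_i-c_i|)\ge\sum_n 1=\infty$, contradicting transitivity. The failure of the second inequality of (R2) is handled by the parallel block construction with $a_i=0$, $b_i=x_n+y_n$, $c_i=y_n$, so that the large value $f(x_n)$ appears as $f(|b_i-c_i|)$ while the other two pairwise sums converge.

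The delicate point is the quantitative bookkeeping in the converse: one needs a polynomial (rather than merely linear) failure of (R2) and block multiplicities $k_n$ calibrated to the reciprocal of the large value, so that the \emph{boundedness} hypothesis on $f$ can be leveraged to make two of the three pairwise sums summable (of order $1/n^3$) while the third diverges. Without boundedness this balance could not be arranged, which is why the hypothesis $f$ bounded is essential rather than cosmetic.
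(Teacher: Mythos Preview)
The paper does not supply its own proof of this proposition; it is quoted verbatim from M\'atrai \cite{Ma} and used as a black box. Your argument is a correct, self-contained proof: the forward direction via the one-dimensional ``collinearity'' identity (the largest of $|x_n-y_n|$, $|y_n-z_n|$, $|x_n-z_n|$ is the sum of the other two) is exactly the right observation, and the block construction for the converse is the standard way to amplify a pointwise failure of (R2) into a global failure of transitivity, with boundedness of $f$ used precisely where you say. One cosmetic remark: after disposing of the degenerate case you write ``so assume $f(u_n+v_n)>0$'', but what you actually need (and have) is that in the non-degenerate case $f(u_n)+f(v_n)>0$, whence $f(u_n+v_n)>n^3(f(u_n)+f(v_n))>0$ automatically; the phrasing slightly obscures this.
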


A nonreducibility result was obtained for a wider class of $\mathbf{E}_f$'s in \cite{Ma} as follows.

\begin{theorem} [M\'{a}trai \cite{Ma}, Theorem 18]\label{inredu}
Let $1\le \alpha <\infty$ and let $\varphi, \psi: [0,1] \to \mathbb{R}^+$ be continuous functions. Set $f(x)=x^\alpha \varphi(x), g(x)= x^\alpha \psi(x)$ for $x \in [0,1]$ and suppose that $f,g$ are bounded and $\mathbf{E}_f$ and $\mathbf{E}_g$ are equivalence relations. Suppose $\psi(x)>0(x>0)$, and
\begin{enumerate}
\item[${\rm (A_1)}$] there exists $\varepsilon>0, M<\omega$ such that for every $n>M$ and $x,y \in [0,1]$,
\[
\varphi(x) \le \varepsilon \varphi(y)\varphi(1/2^n)\Rightarrow x \le \frac{y}{2^{n+1}};
\]
\item[${\rm (A_2)}$] $\lim_{n\to \infty}\psi(1/2^n)/\varphi(1/2^n)=0$.
\end{enumerate}
Then $\mathbf{E}_g \nleq_B \mathbf{E}_f$.
\end{theorem}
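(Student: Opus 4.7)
The plan is to argue by contradiction: suppose $\theta \colon [0,1]^\omega \to [0,1]^\omega$ is a Borel reduction from $\mathbf{E}_g$ to $\mathbf{E}_f$ and produce an $\mathbf{E}_g$-related pair whose $\theta$-images violate $\mathbf{E}_f$. The first step is to upgrade $\theta$ via standard descriptive set theory. Restricting to a comeager subset of $[0,1]^\omega$, we may assume $\theta$ is continuous; then, applying Kuratowski-Ulam and pruning further, we can choose a base point $z \in [0,1]^\omega$ around which each output coordinate $\theta_k$ is essentially determined, up to arbitrarily small error, by finitely many input coordinates. In particular, perturbations of $z$ confined to a sufficiently late tail produce only negligible effect on any prescribed finite set of output coordinates.

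Next, I would exploit \textrm{(A$_2$)} to build the input perturbation. Setting $\delta_n = 1/2^n$ gives $g(\delta_n)/f(\delta_n) = \psi(1/2^n)/\varphi(1/2^n) \to 0$, so $g$ is asymptotically much cheaper than $f$ along this scale. Partition $\omega$ into consecutive blocks $B_i$ and, for a binary parameter $\sigma \in \{0,1\}^\omega$, define $x^\sigma$ by perturbing $z$ at selected coordinates inside $B_i$ by $\delta_{n_i}$ whenever $\sigma(i)=1$, choosing the block sizes $|B_i|$ to grow fast enough that $\sum_i |B_i| g(\delta_{n_i}) < \infty$ (guaranteeing $(x^\sigma, z) \in \mathbf{E}_g$), while simultaneously $\sum_i |B_i| f(\delta_{n_i}) = \infty$; condition \textrm{(A$_2$)} is precisely what creates the gap between these two sums.

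Now the reduction property forces $\sum_k f\bigl(|\theta_k(x^\sigma) - \theta_k(z)|\bigr) < \infty$ for every $\sigma$. Here \textrm{(A$_1$)} enters in contrapositive form: whenever $x > y/2^{n+1}$, one has $\varphi(x) > \varepsilon\, \varphi(y)\,\varphi(1/2^n)$, which translates any moderate lower bound on $|\theta_k(x^\sigma) - \theta_k(z)|$ into a correspondingly clean lower bound on $\varphi(|\theta_k(x^\sigma)-\theta_k(z)|)$, and hence on $f$ of the same quantity. Combined with the finitization of $\theta$ around $z$ obtained in the first step, this lets one locate, for each block $B_i$, enough output coordinates where the $f$-increment is bounded below in terms of $f(\delta_{n_i})$; a pigeonhole over $\sigma$ — or equivalently a Baire category / Fubini choice of $\sigma$ — then produces some specific $\sigma$ along which the output $f$-sum diverges, contradicting $\theta(x^\sigma)\,\mathbf{E}_f\,\theta(z)$.

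The main obstacle is the coordination between the three ingredients: the continuity/finitization of $\theta$, the rate condition \textrm{(A$_2$)} that fixes the block widths and the perturbation heights, and the lower-bound mechanism from \textrm{(A$_1$)}. One must arrange the $n_i$, the blocks $B_i$ and the exceptional sets produced by continuity so that the a priori upper bound $\sum_k f(|\theta_k(x^\sigma)-\theta_k(z)|) < \infty$ is actually violated for a positive measure (or comeager) set of $\sigma$; this quantitative matching is the heart of the Dougherty--Hjorth style argument adapted to $\mathbf{E}_f$, and is where essentially all of the technical effort will be concentrated.
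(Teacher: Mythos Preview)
The paper does not prove this theorem: it is quoted verbatim from M\'atrai \cite{Ma} and no argument is supplied here. The only trace of the proof in the present paper is the parenthetical remark that, to obtain the variant with condition $({\rm A}_2)'$, one should ``modify $Z_k$ in the proof of Theorem 18 of \cite{Ma} by $Z_k=\{i/2^{n_k}: 0\le i \le 2^{n_k}\}$'' and then copy the rest word for word. So there is no in-paper proof to compare your proposal against.

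That said, your outline is the correct template --- it is the Dougherty--Hjorth nonreducibility scheme, which is exactly what M\'atrai adapts. The one substantive hint the paper does give is worth noting: the test perturbations in M\'atrai's argument are not binary $\{0,\delta_{n_i}\}$ as in your sketch, but range over the entire dyadic grid $Z_k=\{i/2^{n_k}:0\le i\le 2^{n_k}\}$ in each block of coordinates. Working with the full grid, rather than a single two-point perturbation, is what lets $({\rm A}_1)$ do its job cleanly: in contrapositive form $({\rm A}_1)$ says that a lower bound on $x$ relative to $y/2^{n+1}$ forces a lower bound on $\varphi(x)$ relative to $\varphi(y)\varphi(1/2^n)$, and one needs many grid points to pigeonhole the images $\theta_k$ into such a configuration. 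Your last paragraph correctly identifies this coordination as the crux, but if you intend to actually carry out the proof you should switch from binary perturbations to the grid $Z_k$ and consult M\'atrai's paper directly for the bookkeeping.
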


In fact, we may replace condition ${\rm (A_2)}$ in the theorem by

${\rm (A_2)'}$ $\liminf_{n\to \infty}\psi(1/2^n)/\varphi(1/2^n)=0$.

Actually, if we select a sequence  $(n_k)_{k<\omega}$ such that $\lim_{k\to \infty}\psi(1/2^{n_k})/\varphi(1/2^{n_k})=0$ and modify $Z_k$ in the proof of Theorem 18 of \cite{Ma} by $Z_k=\{i/2^{n_k}, 0\le i \le 2^{n_k}\}$,  we can check  the proof is still valid under condition ${\rm (A_2)'}$, the proof is almost word for word, a copy of the proof of  Theorem 18.   If there is no confusion, we may say a function satisfies condition ${\rm (A_2)'}$ in Theorem \ref{inredu}  instead of satisfying condition ${\rm (A_2)}$ in Theorem \ref{inredu}. In this paper, condition ${\rm (A_2)'}$ is the key to prove incomparability between equivalence relations.

To satisfy ${\rm (A_1)}$, we give the following lemma:
\begin{lemma}\label{A1}
Let $\alpha \ge 1$ and $\varphi:[0,1] \to \mathbb{R}^+$ be an essentially increasing function with $\varphi(1/2)>0$. Set $f(x)=x^\alpha\varphi(x)$ for $x\in [0,1]$. If there exists $\delta >0$ such that for each $n<\omega$, $\varphi(1/2^{2n})\ge \delta \varphi(1/2^n)$, then $\mathbf{E}_f$ is an equivalence relation and condition ${\rm (A_1)}$ in Theorem \ref{inredu} holds.
\end{lemma}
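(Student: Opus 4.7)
The plan is to verify conditions $(R_1)$ and $(R_2)$ of Proposition~\ref{E-f} (so that $\mathbf{E}_f$ is an equivalence relation) and then condition $(A_1)$ of Theorem~\ref{inredu}. Let $C_1 \ge 1$ denote the essentially increasing constant of $\varphi$. From the hypotheses I first extract: $\varphi \le C_1 \varphi(1)$; $\varphi(1) > 0$ (else $\varphi \equiv 0$, contradicting $\varphi(1/2) > 0$); combining $\varphi(1/2^{2n}) \ge \delta\varphi(1/2^n)$ with essentially increasing applied to $1/2^{n+1} \ge 1/2^{2n}$ (valid for $n \ge 1$) yields $\varphi(1/2^{n+1}) \ge \eta\,\varphi(1/2^n)$ with $\eta := \delta/C_1$, so in particular $\varphi(1/2^n) > 0$ for every $n$. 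Finally, Corollary~\ref{funseq3} supplies a constant $C_0 \ge 1$ with $\varphi(\sqrt{y}) \le C_0\varphi(y)$ on $[0,1]$.

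Using these, I prove a doubling estimate $\varphi(2y) \le C\varphi(y)$ for $y \in (0, 1/2]$: when $y \le 1/4$, $2y \le \sqrt{y}$, so essentially increasing together with the bound $\varphi(\sqrt{y}) \le C_0\varphi(y)$ gives $\varphi(2y) \le C_1 C_0 \varphi(y)$; when $1/4 < y \le 1/2$, both $\varphi(2y) \le C_1\varphi(1)$ and $\varphi(y) \ge \delta\varphi(1/2)/C_1$ are controlled by explicit positive constants. Condition $(R_1)$ is immediate from $\alpha \ge 1$. For $(R_2)$, $f = x^\alpha\varphi(x)$ is essentially increasing with constant $C_1$; the second inequality $f(x) \le C(f(x+y)+f(y))$ follows at once from $x \le x+y$. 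For the first inequality $f(x+y) \le C(f(x)+f(y))$, assume without loss of generality $x \le y$ and bound $(x+y)^\alpha \le 2^\alpha y^\alpha$; control $\varphi(x+y)$ by $\varphi(x+y) \le C_1\varphi(2y) \le C_1 C\varphi(y)$ (doubling) when $y \le 1/2$, and use $f(x+y) \le C_1\varphi(1)$ combined with $f(y) \ge (1/2)^\alpha \varphi(1/2)/C_1$ when $y > 1/2$.

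For $(A_1)$ I argue by contrapositive: I produce $\varepsilon > 0$ and $M < \omega$ such that $n > M$ and $x > y/2^{n+1}$ force $\varphi(x) > \varepsilon\varphi(y)\varphi(1/2^n)$. The case $y = 0$ is easy (either $\varphi(0) = 0$ so $\varphi(x) > 0$ suffices, or $\varepsilon$ is chosen small enough that the antecedent is vacuous by boundedness of $\varphi$). For $y > 0$, pick $k \ge 0$ with $1/2^{k+1} < y \le 1/2^k$; then $x > 1/2^{n+k+2}$, so essentially increasing gives $\varphi(x) \ge \varphi(1/2^{n+k+2})/C_1$. The heart of the proof is the two-variable estimate
\[
\varphi(1/2^{n+k+2}) \ge C_2\,\varphi(1/2^k)\varphi(1/2^n)
\]
for some universal $C_2 > 0$, valid for $n \ge 1$ and $k \ge 0$. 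Combined with $\varphi(1/2^k) \ge \varphi(y)/C_1$, this yields $\varphi(x) \ge (C_2/C_1^2)\varphi(y)\varphi(1/2^n)$, so any $\varepsilon < C_2/C_1^2$ suffices. To prove the estimate, assume without loss of generality $k \le n$; then $n+k+2 \le 2(n+1)$, and essentially increasing, the hypothesis at $n+1$, and the $\eta$-step together give $\varphi(1/2^{n+k+2}) \ge \varphi(1/2^{2(n+1)})/C_1 \ge \delta\eta\,\varphi(1/2^n)/C_1$; the factor $\varphi(1/2^k)$ is then absorbed using $\varphi(1/2^k) \le C_1\varphi(1)$. The case $k > n$ is symmetric, and the boundary $k = 0$ is handled by applying the $\eta$-step twice.

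The main obstacle is this two-variable estimate. Iterating $\varphi(1/2^{m+1}) \ge \eta\varphi(1/2^m)$ naively from $n$ down to $n+k+2$ produces a factor $\eta^{k+2}$, far too small when $\varphi(1/2^k)$ decreases only subgeometrically. The resolution is to invoke the stronger hypothesis $\varphi(1/2^{2m}) \ge \delta\varphi(1/2^m)$ only at the scale $m = \max(k,n)+1$ and absorb the smaller index factor via the universal bound $\varphi \le C_1\varphi(1)$.
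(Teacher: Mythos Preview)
Your proof is correct and follows the same overall strategy as the paper: verify $(R_1)$ and $(R_2)$ of Proposition~\ref{E-f}, then establish the contrapositive of $(A_1)$ via the two-variable estimate $\varphi(1/2^{n+k+2}) \ge C_2\,\varphi(1/2^k)\varphi(1/2^n)$, obtained by applying the squaring hypothesis once near $\max(k,n)$ and absorbing the smaller factor with the global bound $\varphi\le C_1\varphi(1)$. The paper's version of this last step is organized as $\psi(1/2^m)\psi(1/2^n)\lesssim \psi(1)\psi(1/2^{m+n})\lesssim \psi(1)\psi(1/2^{m+n+2})$, which is the same idea written from the other side.

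The one genuine organizational difference is in how $(R_2)$ is handled. The paper begins by replacing $\varphi$ with a genuinely increasing $\psi$ satisfying $(\psi)\approx(\varphi)$, so that $g(x)=x^\alpha\psi(x)$ is monotone and the subadditivity check reduces to a direct dyadic comparison $g(1/2^{n-1})\lesssim g(1/2^{n+1})$ without carrying essentially-increasing constants. You instead keep $\varphi$ and import Corollary~\ref{funseq3} to obtain $\varphi(\sqrt{y})\le C_0\varphi(y)$, from which the doubling bound $\varphi(2y)\le C\varphi(y)$ and hence $(R_2)$ follow. Both routes work; the paper's is self-contained within the lemma, while yours is a bit shorter once Corollary~\ref{funseq3} is in hand.
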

\begin{proof}
Let $\psi:[0,1] \to \mathbb{R}^+$ be an increasing function with $(\varphi(x))_{x \in [0,1]} \approx (\psi(x))_{x\in [0,1]}$, then there is $K \ge 1$ such that $\varphi(x)/K  \le \psi(x)  \le K \varphi(x)$ for $x \in [0,1]$. For each $n<\omega$,
\[
\psi(1/2^{2n}) \ge \delta/K^2 \cdot \psi(1/2^n).
\]
Set $g(x)=x^\alpha\psi(x)$ for $x\in [0,1]$, then $\mathbf{E}_f=\mathbf{E}_g$. Since $\varphi(1/2)>0$ and $\psi$ is  increasing, we have $\psi(x)>0$ for $x>0$. To show $\mathbf{E}_g$ is an equivalence relation, by Proposition \ref{E-f}, we need only to check ${\rm (R_1)}$ and ${\rm (R_2)}$.

For ${\rm (R_1)}$, $g(0)=0$ is trivial.

For ${\rm (R_2)}$, let $x, y \in [0,1]$ with $x+y \in [0,1]$. Without loss of generality, we can assume that $x \ge y > 0$. Since $g(x)=x^\alpha \psi(x)$ is increasing, we have
\[
g(x) \le g(x+y) \le (g(x+y)+g(y)).
\]

If $x>1/8$, then
\[
g(x+y) \le g(1) \le \frac{g(1)}{g(1/8)}g(x)\le \frac{g(1)}{g(1/8)}(g(x)+g(y)).
\]

If $x\le 1/8$, let $x \in (1/2^{n+1},1/2^n]$ for some $n \ge 3$. Then
\[
g(x+y) \le g(2x) \le g(1/2^{n-1})=\frac{1}{2^{(n-1)\alpha}}\psi(1/2^{n-1}),
\]
and
\[
g(x)\ge g(1/2^{n+1}) \ge  \frac{1}{2^{(n+1)\alpha}}\psi(1/2^{2n-2})\ge \frac{\delta}{2^{(n+1)\alpha}\cdot K^2 }\psi(1/2^{n-1}).
\]
Therefore
\[
g(x+y) \le \frac{4^\alpha K^2}{\delta}(g(x)+g(y)).
\]

Let $C =\max\{g(1)/g(1/8), (4^\alpha K^2)/\delta\}$,
then $g(x+y) \le C(g(x)+g(y))$.
Therefore ${\rm (R_2)}$ holds and  $\mathbf{E}_g$ is an equivalence relation, so  $\mathbf{E}_f$ is
an equivalence relation as well.

For ${\rm (A_1)}$, fix a  $0<\varepsilon<\min \{1/(2\psi(1)), \delta^2/(2K^4\psi(1))\}$.
For $x,y \in [0,1]$ and $n\ge 2$, we  show if $x>y/2^{n+1}$, then
\[
\psi(x)> \varepsilon \psi(y)\psi(1/2^n).
\]

If $y=0$, then $\varepsilon \psi(0)\psi(1/2^n) \le \varepsilon \psi(0)\psi(1)\le \psi(0)/2<\psi(x)$.

If $y>0$, let $y \in (1/2^{m+1},1/2^m]$ for some $m \in \omega$, then $x>1/2^{m+n+2}$. Without loss of generality, we can assume $m \ge n$, then
\[
\psi(1/2^m)\psi(1/2^n) \le  K^2/\delta \cdot\psi(1/2^{2m}) \psi(1/2^n) \le K^2/\delta \cdot\psi(1/2^{m+n})\psi(1).
\]
Therefore
\begin{align*}
\varepsilon \psi(y)\psi(1/2^n) & \le \varepsilon \psi(1/2^m)\psi(1/2^n)
\le (\varepsilon K^2)/\delta \cdot \psi(1)\psi(1/2^{m+n}) \\
& \le (\varepsilon K^4)/\delta^2 \cdot \psi(1)\psi(1/2^{2m+2n}) \le (\varepsilon K^4)/\delta^2 \cdot \psi(1)\psi(1/2^{m+n+2})\\
&<\psi(x).
\end{align*}
So $\psi$ satisfies condition ${\rm (A_1)}$ in Theorem \ref{inredu}.
Let $\varepsilon'=\varepsilon/K^3$, for $x, y \in [0,1]$ and $n\ge 2$, if $\varphi(x) \le \varepsilon' \varphi(y)\varphi(1/2^n)$, then $\psi(x)\le \varepsilon \psi(y)\psi(1/2^n)$, hence $x \le y/2^{n+1}$, therefore $\varphi$ satisfies condition ${\rm (A_1)}$ in Theorem \ref{inredu}.
\end{proof}

The next theorem is a reducibility result for $\mathbf{E}_f$'s given in \cite{Ma} . The
original version in the  Real Analysis Exchange  contains  an error,  and the  revised version in his homepage(\url{http://www.renyi.hu/~matrait/}) has corrected this error by adding condition  \eqref{redu03}.
\begin{theorem}[M\'{a}trai \cite{Ma}, Theorem 16]\label{redu}
Let $f ,g : [0,1] \to \mathbb{R}^+$  be continuous essentially increasing functions such that $\mathbf{E}_f $ and $\mathbf{E}_g$ are equivalence relations. Suppose there exists a function $\kappa :\{1/2^i: i< \omega \} \to [0,1]$ satisfying the recursion
\begin{equation}\label{redu01}
f(1/2^n)= \sum_{i=0}^n g(\kappa (1/2^i)/2^{n-i})\quad(n<\omega)
\end{equation}
such that for some $L \ge 1$,
\begin{equation}\label{redu02}
\sum_{i=n}^\infty g(\kappa (1/2^i)) \le L \sum_{i=0}^n g(\kappa (1/2^i)/2^{n-i})\quad(n<\omega)
\end{equation}
and
\begin{equation}\label{redu03}
\kappa(1/2^n) \le L \cdot \max\{ \kappa (1/2^i)/2^{n-i}: i<n \}\quad(n< \omega).
\end{equation}
Then $\mathbf{E}_f \le_B \mathbf{E}_g$.
\end{theorem}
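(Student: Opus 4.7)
The plan is to build an explicit continuous reduction $\theta:[0,1]^\omega \to [0,1]^{\omega\times\omega}$ that splits each input coordinate $x_n$ into countably many output coordinates indexed by $i\in\omega$, so that the decomposition of $f(1/2^k)$ recorded by the recursion \eqref{redu01} is realized geometrically as a sum of $g$-weights of the output coordinates. Concretely, for each $n$ and $i$ I would set
\[
\theta(x)_{n,i} = \kappa(1/2^i)\cdot\min\{1,\,2^i x_n\}.
\]
This map is piecewise linear and $2^i\kappa(1/2^i)$-Lipschitz in $x_n$, takes values in $[0,\kappa(1/2^i)]\subseteq[0,1]$, and vanishes at $x_n=0$, so $\theta$ is continuous and $\theta(x)\in[0,1]^{\omega\times\omega}$. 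After reindexing $\omega\times\omega$ as $\omega$ this becomes a candidate Borel reduction into $[0,1]^\omega$.

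The first substantive step is the upper bound $\sum_i g(|\theta(x)_{n,i}-\theta(y)_{n,i}|)\le C_1 f(|x_n-y_n|)$ for a constant $C_1$ independent of $n$. Writing $t=|x_n-y_n|$ and choosing $k$ with $t\in(1/2^{k+1},1/2^k]$, the Lipschitz and boundedness properties give $|\theta(x)_{n,i}-\theta(y)_{n,i}|\le \kappa(1/2^i)/2^{k-i}$ when $i\le k$ and $\le\kappa(1/2^i)$ when $i>k$. Using that $g$ is essentially increasing, recognising the head of the sum as $f(1/2^k)$ via \eqref{redu01}, and controlling the tail by \eqref{redu02}, one arrives at an upper bound proportional to $f(1/2^k)$, which is comparable to $f(t)$ by the essential monotonicity of $f$.

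The main obstacle is the matching lower bound $\sum_i g(|\theta(x)_{n,i}-\theta(y)_{n,i}|)\ge C_2 f(|x_n-y_n|)$, and this is precisely where the additional hypothesis \eqref{redu03} enters. For $t\in(1/2^{k+1},1/2^k]$ I would identify a ``dominant scale'' $i_\star$, depending on how $x_n$ and $y_n$ sit relative to the thresholds $1/2^i$, at which $|\theta(x)_{n,i_\star}-\theta(y)_{n,i_\star}|$ is bounded below by a constant multiple of $\kappa(1/2^{i_\star})/2^{k-i_\star}$, and then argue that enough of the neighbouring scales $i\le k$ contribute lower bounds of the same geometric form to reconstitute a positive fraction of $\sum_{i=0}^k g(\kappa(1/2^i)/2^{k-i})=f(1/2^k)$. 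The role of \eqref{redu03} is to forbid pathological jumps in $\kappa$ at late indices that would concentrate most of the mass of $f(1/2^k)$ in a single term $g(\kappa(1/2^k))$, which my piecewise-linear map would fail to realise at a coarser difference. Continuity together with the essentially-increasing hypothesis on $f$ and $g$ would then be used throughout to pass from dyadic $t$ to all $t\in(0,1]$, along the lines of Propositions \ref{funseq1} and \ref{funseq2}.

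Putting the two bounds together and summing over $n$ yields $C_2\sum_n f(|x_n-y_n|)\le \sum_{n,i} g(|\theta(x)_{n,i}-\theta(y)_{n,i}|)\le C_1\sum_n f(|x_n-y_n|)$, so that $(x_n)\,\mathbf{E}_f\,(y_n)$ if and only if $\theta((x_n))\,\mathbf{E}_g\,\theta((y_n))$, completing the reduction. I expect the hard part to be the case analysis in ``mixed'' regimes where $x_n$ and $y_n$ straddle one or several thresholds $1/2^i$ in an unbalanced way and a naive pointwise lower bound on $|\theta(x)_{n,i}-\theta(y)_{n,i}|$ degenerates; in exactly those regimes \eqref{redu03} must be invoked to guarantee that a definite fraction of the $g$-weight survives.
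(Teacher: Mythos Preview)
The paper does not prove this theorem; it is quoted from M\'atrai \cite{Ma}, Theorem~16, with the remark that \eqref{redu03} was added in a corrected version on M\'atrai's homepage. So there is no in-paper argument to compare against, and what follows addresses your sketch on its own terms.

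Your upper-bound argument via \eqref{redu01} and \eqref{redu02} is correct, but the particular map $\theta(x)_{n,i}=\kappa(1/2^i)\min\{1,2^i x_n\}$ is \emph{not} a reduction, and the failure is exactly in the lower bound you flag as delicate. The problem is saturation: when $x_n,y_n\in(1/2,1]$, every coordinate with $i\ge 1$ sends both points to $\kappa(1/2^i)$, so only $i=0$ survives and $\sum_i g\bigl(|\theta(x)_{n,i}-\theta(y)_{n,i}|\bigr)=g\bigl(\kappa(1)\,|x_n-y_n|\bigr)$, which need not dominate $f(|x_n-y_n|)$. Concretely, take $g=\mathrm{Id}$ and $\kappa(1/2^i)=2^{-i}$; then \eqref{redu01} gives $f(1/2^k)=(k+1)/2^k$, and \eqref{redu02}, \eqref{redu03} hold with $L=2$. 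With $x_n\equiv 3/5$ and $y_n=3/5+t_n$ for $t_n=1/(n\log^2 n)$ one has $\sum_n f(t_n)\asymp\sum_n 1/(n\log n)=\infty$, yet your map yields $\sum_{n,i} g(|\theta(x)_{n,i}-\theta(y)_{n,i}|)=\sum_n t_n<\infty$. Thus $\theta$ fails to reduce $\mathbf{E}_f$ to $\mathbf{E}_g$, and \eqref{redu03} is of no help here since the breakdown already occurs for the tame choice $\kappa(1/2^i)=2^{-i}$.

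The overall architecture---splitting each input coordinate across dyadic scales, matching \eqref{redu01} at scale~$k$, bounding the tail by \eqref{redu02}, and invoking \eqref{redu03} at exceptional scales---is right, and is what the citation indicates M\'atrai does. What must change is the scale-$i$ coordinate map: one needs a $1$-Lipschitz function of $2^i x_n$ that does not plateau (for instance a periodic triangle wave), so that regardless of where $x_n,y_n$ sit in $[0,1]$ a positive proportion of the scales $i\le k$ still contribute on the order of $g(\kappa(1/2^i)/2^{k-i})$. Your diagnosis of the role of \eqref{redu03} then applies to that corrected map, not to the saturating one.
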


Mostly, we focus on equivalence relations $\mathbf{E}_f, \mathbf{E}_g$ where $f(x)=x^\alpha \varphi(x)$ and $g(x)=x^\alpha \psi(x)$ for $x \in [0,1]$.
Set $\kappa(1/2^n)=\mu(n)/2^{n}$, then $0 \le \mu(n) \le 2^n$ for each $n<\omega$ and  \eqref{redu01}, \eqref{redu02}, \eqref{redu03} turn to:
\begin{equation}\label{redu10}
    \varphi(1/2^n)=\sum_{i=0}^n \mu(i)^\alpha \psi(\mu(i)/2^n)\quad(n<\omega),
\end{equation}
\begin{equation}\label{redu12}
\sum_{i=0}^{\infty}\frac{\mu(n+i)^{\alpha}}{2^{i\alpha}}\psi\Big(\frac{\mu(n+i)}{2^{n+i}}\Big)  \le L  \sum_{i=0}^{n}\mu(i)^{\alpha}\psi\Big(\frac{\mu(i)}{2^{n}}\Big)\quad(n<\omega),
\end{equation}
\begin{equation}\label{redu13}
\mu(n) \le L \cdot \max_{i<n} \mu(i)\quad(n<\omega).
\end{equation}
Then we have the following easy lemma. The idea of  this lemma and Proposition \ref{apphold} come from  Corollary 29 of  \cite{Ma}.

\begin{lemma}\label{redu1}
Let $1 \le \alpha  <\infty$ and $\varphi , \psi: [0,1] \to \mathbb{R}^+$,  $\varphi$ is essentially increasing on $[0,1]$ and there is $\delta>0$ such that for each $n<\omega$, $\varphi(1/2^{n+1})\ge \delta \varphi(1/2^n)$. Set $f(x)=x^\alpha \varphi(x)$ and $g(x)=x^\alpha \psi(x)$ for $x\in [0,1]$, suppose $f, g$ are continuous essentially increasing  on $[0,1]$ and $\mathbf{E}_f$ and $\mathbf{E}_g$ are equivalence relations. Assume  there is $\mu: \omega \to \mathbb{R}^+$ satisfying  $\mu(n) \le 2^{n}$ for each $n<\omega$ and there is $L \ge 1$ such that for each $n<\omega$,  \eqref{redu12} and \eqref{redu13} hold. If
\begin{equation}\label{redu11}
        \Big(\varphi(1/2^n)\Big)_{n<\omega}\approx\Big(\sum_{i=0}^n \mu(i)^\alpha \psi(\mu(i)/2^n)\Big)_{n<\omega},
\end{equation}
 then $\mathbf{E}_f \le_B \mathbf{E}_g$.
\end{lemma}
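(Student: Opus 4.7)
The plan is to reduce to Theorem \ref{redu} by replacing $f$ with a function $\tilde f$ that makes the approximate recursion \eqref{redu11} exact while preserving the equivalence relation, i.e., $\mathbf{E}_{\tilde f} = \mathbf{E}_f$. With $\kappa(1/2^i) = \mu(i)/2^i$ as in the setup preceding the lemma, the assumptions \eqref{redu12} and \eqref{redu13} are literally the conditions \eqref{redu02} and \eqref{redu03} required by Theorem \ref{redu} for this $\kappa$, so once $\tilde f$ and the exact recursion \eqref{redu01} are in hand, Theorem \ref{redu} will apply directly to the triple $(\tilde f, g, \kappa)$.

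I would construct $\tilde f$ as follows: set $\tilde\varphi(1/2^n) = \sum_{i=0}^n \mu(i)^\alpha \psi(\mu(i)/2^n)$ at the dyadic points, extend $\tilde\varphi$ to $(0,1]$ by linear interpolation on each interval $[1/2^{n+1}, 1/2^n]$, and define $\tilde f(x) = x^\alpha \tilde\varphi(x)$ for $x>0$ with $\tilde f(0) = 0$. By construction $\tilde f(1/2^n) = \sum_{i=0}^n g(\kappa(1/2^i)/2^{n-i})$, which is exactly \eqref{redu01}. To obtain $\mathbf{E}_{\tilde f} = \mathbf{E}_f$, I would apply Proposition \ref{funseq1} to the pair $(\varphi, \tilde\varphi)$ with $x_n = 1/2^n$: the function $\varphi$ is essentially increasing and satisfies $\varphi(1/2^{n+1}) \geq \delta \varphi(1/2^n)$ by hypothesis; linear interpolation automatically provides the control $\min\{\tilde\varphi(1/2^{n+1}), \tilde\varphi(1/2^n)\} \leq \tilde\varphi(x) \leq \max\{\tilde\varphi(1/2^{n+1}), \tilde\varphi(1/2^n)\}$ on $[1/2^{n+1}, 1/2^n]$ (so $K=1$ works); and the sequence-level equivalence $\tilde\varphi(1/2^n) \approx \varphi(1/2^n)$ is precisely the hypothesis \eqref{redu11}. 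The conclusion of Proposition \ref{funseq1} is $\tilde\varphi \approx \varphi$ on $(0,1]$, whence $\tilde f \approx f$ on $(0,1]$ and therefore $\mathbf{E}_{\tilde f} = \mathbf{E}_f$.

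Finally I would check that $\tilde f$ satisfies the remaining hypotheses of Theorem \ref{redu}: continuity of $\tilde f$ on $(0,1]$ is immediate from the piecewise linear definition, and continuity at $0$ follows because an essentially increasing function on $[0,1]$ is bounded, so $\tilde\varphi$ is bounded and $\tilde f(x) = O(x^\alpha) \to 0$; essentially increasing and the equivalence-relation property of $\mathbf{E}_{\tilde f}$ both transfer from $f$ via $\tilde f \approx f$. Theorem \ref{redu} then yields $\mathbf{E}_f = \mathbf{E}_{\tilde f} \leq_B \mathbf{E}_g$. The only mildly delicate step in this argument is lining up the hypotheses of Proposition \ref{funseq1} with the data given in the lemma—the assumption $\varphi(1/2^{n+1}) \geq \delta \varphi(1/2^n)$ is included in the statement precisely so that this lineup goes through and the approximate recursion can be upgraded to the exact one required by Theorem \ref{redu}.
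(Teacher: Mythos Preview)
Your proposal is correct and follows essentially the same approach as the paper: the paper defines $\theta$ (your $\tilde\varphi$) at dyadic points by the right-hand side of \eqref{redu11}, extends it affinely on each $[1/2^{n+1},1/2^n]$, sets $h(x)=x^\alpha\theta(x)$ (your $\tilde f$), uses Proposition~\ref{funseq1} exactly as you describe to get $h\approx f$ and hence $\mathbf{E}_h=\mathbf{E}_f$, verifies continuity and the essentially-increasing property of $h$, and then applies Theorem~\ref{redu}. Your write-up is, if anything, slightly more explicit about why $K=1$ works in Proposition~\ref{funseq1} and why \eqref{redu12}, \eqref{redu13} are precisely \eqref{redu02}, \eqref{redu03}.
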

\begin{proof}
Let $\theta: [0,1] \to \mathbb{R}^+$ with $\theta(0)=0$ and for each $n<\omega$, define $\theta(1/2^n)$ by
\[
\theta(1/2^n)= \sum_{i=0}^n \mu(i)^\alpha \psi(\mu(i)/2^n),
\]
 then extend $\theta$ to be a continuous function on $(0,1]$ which is affine on  $[1/2^{n+1}, 1/2^n]$ for $n<\omega$, therefore  $(\varphi(1/2^n))_{n<\omega} \approx (\theta(1/2^n))_{n<\omega}$, by Proposition \ref{funseq1},  we have $(\varphi(x))_{x>0}\approx (\theta(x))_{x>0}$.

  Set $h(x)=x^\alpha \theta(x)$ for  $x\in [0,1]$, then $(f(x))_{x\in(0,1]} \approx (h(x))_{x \in (0,1]}$, since $h(0)=f(0)=0$, we have  $(f(x))_{x \in [0,1]} \approx (h(x))_{x \in [0,1]}$, therefore $h$ is an essentially increasing function on $[0,1]$ and $\mathbf{E}_h$ is an equivalence relation with $\mathbf{E}_f=\mathbf{E}_h$.

  Since $\varphi$ is essentially increasing on $[0,1]$, we know $\varphi$ is bounded, therefore $\theta$ is bounded, so $h$ is continuous at zero. Since $\theta$ is continuous on $(0,1]$, we know that $h$ is continuous on $[0,1]$, therefore  $h$ is an  essentially increasing continuous function on $[0,1]$. 
By  Theorem \ref{redu}, we know  $\mathbf{E}_h \le_B \mathbf{E}_g$, therefore $\mathbf{E}_f \le_B \mathbf{E}_g$.
\end{proof}

Now let us take a further research about when condition \eqref{redu11} can be satisfied.

\begin{proposition}\label{apphold}
Let $\mu: \omega \to \mathbb{R}^+$ with $ \mu(n) \le 2^n$ for each $n<\omega$ and  $\varphi, \psi: [0,1] \to \mathbb{R}^+$, if
\begin{equation}\label{apphold1}
\Big(\varphi(1/2^n)\Big)_{n <\omega} \approx \Big( \psi(1/2^n)\sum_{i=0}^n \mu(i)^\alpha\Big)_{n <\omega}
\end{equation}
and there is $K \ge 1$ such that
\begin{equation}\label{apphold2}
\psi(1/2^n)/K \le \psi(\mu(i)/2^n) \le K\psi(1/2^n)\quad(0 \le i \le n <\omega, \mu(i)\neq 0),
\end{equation}
then \eqref{redu11} in Lemma \ref{redu1} holds.
\end{proposition}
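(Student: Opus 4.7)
The plan is to show that, under hypothesis \eqref{apphold2}, the sum $\sum_{i=0}^n \mu(i)^\alpha \psi(\mu(i)/2^n)$ is equivalent to the ``factored'' sum $\psi(1/2^n) \sum_{i=0}^n \mu(i)^\alpha$, and then combine with \eqref{apphold1} to conclude. The point is that \eqref{apphold2} lets us replace $\psi(\mu(i)/2^n)$ by $\psi(1/2^n)$ up to a multiplicative constant $K$, uniformly in $i$ and $n$, so that $\psi(1/2^n)$ may be pulled out of the sum.

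First I would handle the indices $i$ with $\mu(i)=0$ separately. Since $\alpha \ge 1 > 0$, such terms contribute $\mu(i)^\alpha \psi(\mu(i)/2^n) = 0$ to the left-hand sum and $\mu(i)^\alpha = 0$ to the right-hand sum, so they may be discarded from both sides without affecting the comparison. For the remaining indices, \eqref{apphold2} gives
\begin{equation*}
\frac{\psi(1/2^n)}{K}\, \mu(i)^\alpha \;\le\; \mu(i)^\alpha \psi(\mu(i)/2^n) \;\le\; K\, \psi(1/2^n)\, \mu(i)^\alpha
\end{equation*}
for each such $i$ with $0\le i\le n$.

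Summing these inequalities over $i=0,\dots,n$ yields
\begin{equation*}
\frac{1}{K}\, \psi(1/2^n)\sum_{i=0}^n \mu(i)^\alpha \;\le\; \sum_{i=0}^n \mu(i)^\alpha \psi(\mu(i)/2^n) \;\le\; K\, \psi(1/2^n)\sum_{i=0}^n \mu(i)^\alpha,
\end{equation*}
which is exactly the assertion that $\bigl(\sum_{i=0}^n \mu(i)^\alpha \psi(\mu(i)/2^n)\bigr)_{n<\omega} \approx \bigl(\psi(1/2^n)\sum_{i=0}^n \mu(i)^\alpha\bigr)_{n<\omega}$. Composing this with the equivalence \eqref{apphold1} (equivalence of sequences is obviously transitive, since constants multiply) gives \eqref{redu11}.

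There is no real obstacle here: the proof is essentially the observation that \eqref{apphold2} is precisely the uniform control needed to pull $\psi(1/2^n)$ out of the weighted sum. The only minor points to be careful about are (i) handling the $\mu(i)=0$ terms, which are excluded from the hypothesis \eqref{apphold2} but automatically vanish from both sides, and (ii) using $\alpha\ge 1$ (or more generally $\alpha>0$) to ensure $0^\alpha=0$.
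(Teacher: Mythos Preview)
Your proposal is correct and follows essentially the same approach as the paper: both use \eqref{apphold2} termwise (with $\mu(i)=0$ terms vanishing from both sides) to sandwich $\sum_{i=0}^n \mu(i)^\alpha \psi(\mu(i)/2^n)$ between $K^{-1}$ and $K$ times $\psi(1/2^n)\sum_{i=0}^n \mu(i)^\alpha$, and then combine with \eqref{apphold1}. The paper writes the chain of inequalities in one line with combined constant $KC$, while you separate it into two equivalences and invoke transitivity; this is the same argument.
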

\begin{proof}
Let $C \ge 1$ witnesses for each $n<\omega$,
\[
\varphi(1/2^n)/C \le \psi(1/2^n)\sum_{i=0}^n \mu(i)^\alpha \le C\varphi(1/2^n) .
\]
By \eqref{apphold2}, we have
\begin{align*}
\frac{1}{KC}\varphi(1/2^n) &\le \frac{1}{K}\psi(1/2^n)
\sum_{i=0}^n \mu(i)^\alpha
\le \sum_{i=0}^n \mu(i)^\alpha \psi(\mu(i)/2^n)\\
 &\le K\psi(1/2^n)\sum_{i=0}^n \mu(i)^\alpha \le KC\varphi(1/2^n).
\end{align*}
so \eqref{redu11} holds.
\end{proof}

Since $\mu$ may not satisfy $0\le \mu(n) \le 2^n$ for finitely many $n$, we need  the following proposition.

\begin{proposition}\label{changemu}
Let $\mu: \omega \to \mathbb{R}$ and $\varphi , \psi: [0,1] \to \mathbb{R}^+$ with $\varphi(x), \psi(x)>0$ for $x>0$, if there exist $n_0<\omega$ such that for $n \ge n_0, ~0 \le \mu(n)\le 2^n $ and
\begin{equation}\label{changemu1}
\Big(\varphi(1/2^n)\Big)_{n \ge n_0} \approx \Big( \psi(1/2^n)\sum_{i=0}^n \mu(i)^\alpha\Big)_{n \ge n_0}.
\end{equation}
Suppose there is  $K \ge 1$ such that
\begin{equation}\label{changemu2}
\psi(1/2^n)/K \le \psi(\mu(i)/2^n) \le K\psi(1/2^n)\quad(n_0 \le i \le n <\omega, \mu(i)\neq 0).
\end{equation}
Let $\nu: \omega \to \mathbb{R}^+$ with
\begin{equation*}
\nu(n)=
\begin{cases}
1, & n < n_0,\\
\mu(n), & n\ge n_0.
\end{cases}
\end{equation*}
Then $0 \le \nu(n)\le 2^n$ for each $n<\omega$  and  $\nu, \varphi, \psi$ satisfy  \eqref{apphold1} and \eqref{apphold2} in Proposition \ref{apphold}.
\end{proposition}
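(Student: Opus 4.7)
The plan is to verify the three assertions in turn: the inequality $0 \le \nu(n) \le 2^n$, condition \eqref{apphold1} of Proposition \ref{apphold}, and condition \eqref{apphold2}. The first is immediate, since $\nu(n) = 1 \in [0, 2^n]$ for $n < n_0$, while for $n \ge n_0$ we have $\nu(n) = \mu(n) \in [0, 2^n]$ by hypothesis. Condition \eqref{apphold2} reduces to a case split: for $i \ge n_0$ we have $\nu(i) = \mu(i)$ and the bound is exactly \eqref{changemu2}; for $i < n_0$, $\nu(i) = 1$ so $\psi(\nu(i)/2^n) = \psi(1/2^n)$, and the bound holds trivially with the same constant $K$.

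The substantive work lies in \eqref{apphold1}. Set $A(n) := \sum_{i=0}^n \mu(i)^\alpha$ and $B(n) := \sum_{i=0}^n \nu(i)^\alpha$. The key elementary identity, valid for $n \ge n_0$, is
\[
B(n) - A(n) = n_0 - \sum_{i=0}^{n_0-1}\mu(i)^\alpha =: D,
\]
a fixed constant. To convert this additive relation into a multiplicative equivalence, I would bound $A(n)$ below uniformly on $n \ge n_0$. Two observations suffice: $A(n)$ is non-decreasing on $\{n \ge n_0\}$ since $\mu(i)^\alpha \ge 0$ there; and $A(n_0) > 0$, which is extracted from \eqref{changemu1} evaluated at $n_0$ together with the strict positivity $\varphi(1/2^{n_0}), \psi(1/2^{n_0}) > 0$. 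Hence $A(n) \ge A(n_0) > 0$ for all $n \ge n_0$, so $B(n)/A(n) = 1 + D/A(n)$ lies in a bounded interval of positive reals; to see the lower bound is strictly positive when $D < 0$, one uses $B(n_0) = n_0 + \mu(n_0)^\alpha > 0$. Multiplying by $\psi(1/2^n)$ and invoking \eqref{changemu1} yields $(\varphi(1/2^n))_{n \ge n_0} \approx (\psi(1/2^n) B(n))_{n \ge n_0}$.

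To promote this equivalence to all of $\omega$, I handle the finite initial segment $n < n_0$ directly: both $\varphi(1/2^n)$ and $\psi(1/2^n)(n+1)$ are positive reals there, so taking the maximum of the finitely many ratios and enlarging the equivalence constant absorbs these cases. The one non-routine step in the whole argument is extracting the strict positivity $A(n_0) > 0$ from the hypothesis; without it, the additive shift $D$ could swamp a small $A(n)$ in the negative-$D$ regime, and the multiplicative equivalence would fail.
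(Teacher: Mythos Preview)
Your proof is correct and follows essentially the same route as the paper: both arguments extract $\sum_{i=0}^{n_0}\mu(i)^\alpha>0$ from \eqref{changemu1}, use this to show that replacing the first $n_0$ terms of $\mu$ by $1$'s changes the partial sums $\sum_{i=0}^n$ only by a bounded multiplicative factor for $n\ge n_0$, and handle $n<n_0$ by finiteness and positivity. The only cosmetic difference is that the paper compares $\sum_{i\le n_0}\nu(i)^\alpha$ and $\sum_{i\le n_0}\mu(i)^\alpha$ directly by a constant $K_3$, whereas you track the fixed additive offset $D$ and bound the ratio $B(n)/A(n)=1+D/A(n)$ using monotonicity of $A(n)$; your treatment of \eqref{apphold2} is also more explicit than the paper's ``easy to see''.
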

\begin{proof}
From the definition of the $\nu$, we know $0 \le \nu(n)\le 2^n$ for each $n<\omega$.

If $n < n_0$, since $\nu(n), \psi(1/2^n), \varphi(1/2^n)>0$, there is $K_1 \ge 1$ such that for $n <n_0$,
\begin{equation*}
\varphi(1/2^n)/K_1 \le \psi(1/2^n)\sum_{i=0}^n\nu(i)^\alpha \le K_1 \varphi(1/2^n).
\end{equation*}

If $n\ge n_0$, by \eqref{changemu1}, there is $K_2 \ge 1$ such that
\[
\varphi(1/2^n)/K_2 \le \psi(1/2^n)\sum_{i=0}^n \mu(i)^\alpha \le K_2\varphi(1/2^n),
\]
 therefore $\sum_{i=0}^{n_0}\mu(i)^\alpha>0$,  we can select $K_3 \ge 1$ such that
\[
\sum_{i=0}^{n_0}\mu(i)^\alpha/K_3 \le \sum_{i=0}^{n_0}\nu(i)^\alpha \le K_3\sum_{i=0}^{n_0}\mu(i)^\alpha.
\]
Hence for each $n \ge n_0$, we have

\begin{align*}
\psi(1/2^n)\sum_{i=0}^n \nu(i)^\alpha = & \psi(1/2^n)\Big(\sum_{i=0}^{n_0} \nu(i)^\alpha+\sum_{i=n_0+1}^n \nu(i)^\alpha\Big)\\
& \le \psi(1/2^n)\Big(K_3\sum_{i=0}^{n_0} \mu(i)^\alpha+\sum_{i=n_0+1}^n \mu(i)^\alpha\Big) \le K_3\psi(1/2^n)\sum_{i=0}^{n} \mu(i)^\alpha,
\end{align*}
similarly, we can get $\psi(1/2^n)\sum_{i=0}^n \nu(i)^\alpha \ge (\psi(1/2^n)\sum_{i=0}^{n} \mu(i)^\alpha)/K_3$,  therefore
 \[
 \varphi(1/2^n)/K_2K_3 \le \psi(1/2^n)\sum_{i=0}^n \nu(i)^\alpha \le K_2K_3 \varphi(1/2^n).
 \]

 Let $C=\max\{K_1, K_2K_3\}$, then for each $n<\omega$,
\[
\varphi(1/2^n)/C \le  \psi(1/2^n)\sum_{i=0}^n \nu(i)^\alpha \le C \varphi(1/2^n),
\]
so $\nu, \varphi, \psi$ satisfy condition \eqref{apphold1} in Proposition \ref{apphold}. It is easy to see that  $\nu, \varphi, \psi$ satisfy condition \eqref{apphold2} in Proposition \ref{apphold}.
\end{proof}

To satisfy condition \eqref{changemu2} in Proposition \ref{changemu}, we give the following proposition.

\begin{proposition}\label{changemu2hold}
Let  $\mu: \omega \to \mathbb{R}, \psi: [0,1] \to \mathbb{R}^+$ be functions. Suppose $\psi$ is  essentially increasing on $[0,1]$ with $\psi(1/2)>0$ and there is  $\lambda>0$ such that
\begin{equation}\label{changemu2hold1}
\psi(1/2^{2n }) \ge \lambda \psi(1/2^n)\quad(n<\omega).
\end{equation}
Assume there are $0 \le \varepsilon <1 ,~n_0<\omega $ with
\begin{equation}\label{changemu2hold2}
 2^{-\varepsilon n }\le \mu(i) \le 2^{\varepsilon n}\quad(n_0\le i \le n <\omega, \mu(i)\neq 0),
\end{equation}
 then  there is $K\ge 1$ such that
\[
\psi(1/2^n)/K \le \psi(\mu(i)/2^n) \le K\psi(1/2^n)\quad(n_0 \le i \le n <\omega, \mu(i)\neq 0).
\]
\end{proposition}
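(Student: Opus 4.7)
The plan is to first replace $\psi$ with an equivalent increasing function $\widetilde\psi$ (with some constant $K_0\ge 1$), which converts the doubling hypothesis into $\widetilde\psi(1/2^{2n})\ge(\lambda/K_0^2)\widetilde\psi(1/2^n)$. Since $\psi(1/2)>0$ we have $\widetilde\psi>0$ on every $1/2^n$. The hypothesis $2^{-\varepsilon n}\le \mu(i)\le 2^{\varepsilon n}$ translates to $\mu(i)/2^n\in[2^{-(1+\varepsilon)n},\,2^{-(1-\varepsilon)n}]$, so everything reduces to comparing $\widetilde\psi$ at these two endpoints with $\widetilde\psi(1/2^n)$.

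For the lower estimate $\widetilde\psi(\mu(i)/2^n)\gtrsim\widetilde\psi(1/2^n)$: monotonicity gives $\widetilde\psi(\mu(i)/2^n)\ge \widetilde\psi(2^{-(1+\varepsilon)n})$, and since $\varepsilon<1$ we have $2n\ge(1+\varepsilon)n$, so one application of the doubling inequality to $\widetilde\psi$ gives $\widetilde\psi(1/2^n)\le(K_0^2/\lambda)\widetilde\psi(1/2^{2n})\le(K_0^2/\lambda)\widetilde\psi(2^{-(1+\varepsilon)n})$ once $n$ is large enough that $\lceil(1+\varepsilon)n\rceil\le 2n$. This is the easy direction.

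For the upper estimate $\widetilde\psi(\mu(i)/2^n)\lesssim\widetilde\psi(1/2^n)$: monotonicity gives $\widetilde\psi(\mu(i)/2^n)\le \widetilde\psi(2^{-(1-\varepsilon)n})$, and I would set $m=\lfloor(1-\varepsilon)n\rfloor$ so $\widetilde\psi(2^{-(1-\varepsilon)n})\le\widetilde\psi(1/2^m)$. Now iterate the doubling inequality $k$ times to get $\widetilde\psi(1/2^m)\le(K_0^2/\lambda)^k\widetilde\psi(1/2^{2^k m})$. Choose $k=k_0$ with $2^{k_0}>1/(1-\varepsilon)$, which is possible because $\varepsilon<1$; then for all $n$ beyond some threshold $N_0$ one has $2^{k_0}m\ge n$, giving $\widetilde\psi(1/2^{2^k m})\le \widetilde\psi(1/2^n)$ by monotonicity and closing the estimate with constant $(K_0^2/\lambda)^{k_0}$.

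The only delicate point is the interaction between the integer lattice of powers of $2$ and the non-integer exponents $(1\pm\varepsilon)n$: the floor/ceiling produces an error of $1$ in the exponent, which is harmless for large $n$ but must be absorbed for small $n\in[n_0,N_0)$. Since there are only finitely many such $n$ and since $\widetilde\psi(1/2^n),\widetilde\psi(\mu(i)/2^n)>0$ for all relevant indices, one can enlarge the constant $K$ to cover these finitely many cases. Conversion from $\widetilde\psi$ back to $\psi$ via $\psi\approx\widetilde\psi$ produces the final $K$. The main obstacle—choosing the correct number of iterations $k_0$ of the doubling condition so that $2^{k_0}(1-\varepsilon)>1$—is what forces the hypothesis $\varepsilon<1$ to be used essentially.
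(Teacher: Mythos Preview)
Your argument is correct. The approach differs from the paper's in one structural respect: the paper first invokes Corollary~\ref{funseq3} (which in turn rests on Proposition~\ref{funseq2}) to upgrade the dyadic doubling hypothesis $\psi(1/2^{2n})\ge\lambda\psi(1/2^n)$ to the full relation $(\psi(x))_{x\in[0,1]}\approx(\psi(x^2))_{x\in[0,1]}$. With that in hand the paper can iterate at the non-integer exponents $n/2^j$ directly, choosing $m$ with $1-\varepsilon\in[1/2^{m+1},1/2^m]$ and chaining $\psi(1/2^{n/2^{m+1}})\le\psi(1/2^{n/2^m})/\delta\le\cdots\le\psi(1/2^n)/\delta^{m+1}$; this gives uniform constants for all $n\ge n_0$ with no exceptional set. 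Your proof stays on the integer lattice, passes to a genuinely increasing $\widetilde\psi\approx\psi$, uses floors to hit integers, and then absorbs the finitely many small $n$ into the constant by positivity. The number of iterations is the same in both arguments (your $k_0$ with $2^{k_0}>1/(1-\varepsilon)$ matches the paper's $m+1$). The paper's route is a bit cleaner because the case split disappears, while yours is more self-contained since it avoids the auxiliary Corollary~\ref{funseq3}. One minor remark: in your lower estimate no ceiling or threshold is actually needed, since $2n\ge(1+\varepsilon)n$ holds for every $n\ge 0$ and $\widetilde\psi$ is defined and monotone on all of $[0,1]$; the floor issue only arises in the upper estimate.
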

\begin{proof}
By Corollary \ref{funseq3},  there is $0<\delta<1$ such that $\psi(x)/\delta \ge \psi(x^2) \ge \delta \psi(x)$ for $x \in [0,1]$. Let $m<\omega$ satisfying $1- \varepsilon \in [1/2^{m+1}, 1/2^m]$. If $\mu(i)\neq 0$ with $ n_0\le i \le n <\omega$, then
\[
\frac{1}{2^{2n}}\le\frac{1}{2^{(1+\varepsilon) n }} \le \frac{\mu(i)}{2^n}\le \frac{1}{2^{(1-\varepsilon) n }}\le \frac{1}{2^{n/2^{m+1}}}.
\]
Since $\psi$ is essentially increasing, there is $C \ge 1$ such that $\psi(x) \le C\psi(y)$ for $0 \le x \le y \le 1$.
Therefore for $n_0\le i \le n <\omega$ with $ \mu(i)\neq 0$, we have
\[
\psi(\mu(i)/2^n)\ge \psi(1/2^{2n})/C \ge \delta/C  \cdot \psi(1/2^n)
\]
and
\[
\psi(\mu(i)/2^n) \le C\psi(1/2^{n/2^{m+1}}) \le C/\delta \cdot \psi(1/2^{n/2^{m}}) \le \cdots \le C/\delta^{m+1} \cdot \psi(1/2^n).
\]
Set $K=C/\delta^{m+1}$, then we get  the conclusion.
\end{proof}

\begin{corollary}\label{changemu2holdrem}
Let  $\mu: \omega \to \mathbb{R}, \psi: [0,1] \to \mathbb{R}^+$ be functions. Suppose $\psi$ is  essentially increasing on $[0,1]$ with $\psi(1/2)>0$ and there is  $\lambda>0$ such that
\[
\psi(1/2^{2n }) \ge \lambda \psi(1/2^n)\quad(n<\omega).
\]
If there are $0\le \varepsilon<1, n_0<\omega, M \ge 1$ such that
 \[
1/M \le \mu(i) \le 2^{\varepsilon n}\quad(n_0\le i \le n <\omega, \mu(i)\neq 0),
 \]
then there is $K \ge 1$ and $n_0 \le n_1<\omega$ such that
\[
\psi(1/2^n)/K \le \psi(\mu(i)/2^n) \le K\psi(1/2^n)\quad(n_1 \le i \le n <\omega, \mu(i)\neq 0).
\]
\end{corollary}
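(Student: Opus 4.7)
The plan is to deduce this corollary directly from Proposition \ref{changemu2hold} by absorbing the uniform lower bound $1/M$ into an index shift. The only gap between the hypotheses of the corollary and those of Proposition \ref{changemu2hold} is that here we merely have a constant lower bound $\mu(i) \ge 1/M$ rather than the decaying bound $\mu(i) \ge 2^{-\varepsilon n}$. But for $n$ sufficiently large, the constant bound automatically \emph{implies} the decaying bound, so the proposition can be applied after truncating finitely many indices.

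Concretely, I would first choose any $\varepsilon' \in (0,1)$ with $\varepsilon' \ge \varepsilon$ (for instance $\varepsilon' = \max\{\varepsilon, 1/2\}$, which covers the edge case $\varepsilon = 0$), and then set $n_1 = \max\{n_0, \lceil \log_2 M / \varepsilon' \rceil\}$. For every $n \ge n_1$ one has $2^{\varepsilon' n} \ge 2^{\varepsilon' n_1} \ge M$, equivalently $1/M \ge 2^{-\varepsilon' n}$. Combining this with the hypothesis $\mu(i) \le 2^{\varepsilon n} \le 2^{\varepsilon' n}$ gives
\[
2^{-\varepsilon' n} \le \mu(i) \le 2^{\varepsilon' n}\quad(n_1 \le i \le n <\omega,\ \mu(i) \ne 0),
\]
which is exactly condition \eqref{changemu2hold2} of Proposition \ref{changemu2hold} with $\varepsilon'$ in place of $\varepsilon$ and $n_1$ in place of $n_0$.

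Since the hypotheses on $\psi$ (essentially increasing, $\psi(1/2) > 0$, and $\psi(1/2^{2n}) \ge \lambda \psi(1/2^n)$) are assumed verbatim, Proposition \ref{changemu2hold} now applies and produces the required constant $K \ge 1$ with
\[
\psi(1/2^n)/K \le \psi(\mu(i)/2^n) \le K \psi(1/2^n)\quad(n_1 \le i \le n < \omega,\ \mu(i) \ne 0),
\]
which is the desired conclusion. There is essentially no obstacle: the corollary is a routine qualitative strengthening expressing that the exponential lower bound on $\mu(i)$ in Proposition \ref{changemu2hold} may be relaxed to any positive constant, at the cost of pushing the starting index from $n_0$ to some larger $n_1$ depending on $M$ and $\varepsilon'$.
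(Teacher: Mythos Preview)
Your proof is correct and is exactly the intended derivation: the paper states this as an immediate corollary of Proposition~\ref{changemu2hold} without proof, and your argument---choosing $\varepsilon'\in(0,1)$ with $\varepsilon'\ge\varepsilon$ and then $n_1\ge n_0$ large enough that $2^{-\varepsilon' n}\le 1/M$ for $n\ge n_1$---is precisely the routine reduction the paper has in mind.
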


Let $1 \le \alpha <\beta<\infty$ and $\varphi: [0,1] \to \mathbb{R}^+$, set $f(x)=x^\alpha\varphi(x)$ and $g(x)=x^\beta$ for $x \in [0,1]$, we give the following lemma to show the Borel reduction between $\mathbf{E}_f$ and $\mathbf{E}_g$.

\begin{lemma}\label{lb}
Let $1\le \alpha <\infty$ and $\varphi: [0,1] \to \mathbb{R}^+$ be continuous essentially increasing with $\varphi(x)>0$ for $x>0$ and $\lim_{n\to \infty }\varphi(1/2^n)/\varphi(1/2^{n+1})=1$. Set $f(x)=x^\alpha\varphi(x)$ for $x \in [0,1]$, suppose $\mathbf{E}_f$ is an equivalence relation. Then for each $\alpha<\beta<\infty$, we have $\mathbf{E}_f \le_B \mathbf{E}_{\rm Id^{\beta}}$.
\end{lemma}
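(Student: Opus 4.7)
The plan is to apply Lemma \ref{redu1} with $g(x)=x^\beta$ written as $g(x)=x^\alpha\psi(x)$ where $\psi(x)=x^\gamma$ and $\gamma=\beta-\alpha>0$. Setting $\kappa(1/2^n)=\mu(n)/2^n$ as in that lemma, the recursion \eqref{redu10} becomes $\sum_{i=0}^n\mu(i)^\beta=2^{n\gamma}\varphi(1/2^n)$, which suggests putting $a_n=2^{n\gamma}\varphi(1/2^n)$ and defining $\mu(n)^\beta=a_n-a_{n-1}$ for $n\ge 1$. Since $\lim_n\varphi(1/2^n)/\varphi(1/2^{n-1})=1>2^{-\gamma}$, I would fix $\eta>0$ small enough that $2^\gamma(1-\eta)>1$ and pick $n_0$ so that $\varphi(1/2^n)/\varphi(1/2^{n-1})\in[1-\eta,1+\eta]$ for all $n\ge n_0$; this makes $\mu(n)>0$, and since $\varphi$ is bounded and $\gamma/\beta<1$, also ensures $\mu(n)\le 2^n$ after possibly enlarging $n_0$. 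For $n<n_0$ I would set $\mu(n)=1$.

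Next I would verify the remaining hypotheses of Lemma \ref{redu1}. The growth condition $\varphi(1/2^{n+1})\ge\delta\varphi(1/2^n)$ for some $\delta>0$ is immediate from the limit assumption combined with positivity of $\varphi$. For \eqref{redu11}, a direct computation gives $\sum_{i=0}^n\mu(i)^\alpha\psi(\mu(i)/2^n)=2^{-n\gamma}\sum_{i=0}^n\mu(i)^\beta$, which for $n\ge n_0$ equals $2^{-n\gamma}(n_0+a_n-a_{n_0-1})$; the choice of $\eta$ forces $a_n\to\infty$ (via the lower bound $\varphi(1/2^n)\ge(1-\eta)^{n-n_0}\varphi(1/2^{n_0})$), so the additive constant is absorbed and the expression is equivalent to $\varphi(1/2^n)$, while on the finite segment $n<n_0$ both sides are positive and hence trivially equivalent. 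For \eqref{redu13}, I would observe that
\[
\frac{\mu(n)^\beta}{\mu(n-1)^\beta}=\frac{a_n-a_{n-1}}{a_{n-1}-a_{n-2}}\longrightarrow\frac{2^\gamma-1}{1-2^{-\gamma}}=2^\gamma,
\]
so $\mu(n)/\mu(n-1)$ is uniformly bounded for $n>n_0$, and $L$ may be enlarged to absorb the finitely many initial indices.

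The main obstacle is condition \eqref{redu12}, which after simplification becomes $\sum_{k=n}^\infty\mu(k)^\beta/2^{(k-n)\beta}\le L'\sum_{i=0}^n\mu(i)^\beta$. Using the identity $a_k/2^{k\beta}=2^{-k\alpha}\varphi(1/2^k)=f(1/2^k)$, I would apply Abel summation to rewrite, for $n\ge n_0+1$,
\[
\sum_{k=n}^\infty\frac{a_k-a_{k-1}}{2^{k\beta}}=(1-2^{-\beta})\sum_{k=n}^\infty f(1/2^k)-2^{-\gamma}f(1/2^{n-1}).
\]
Essential monotonicity of $\varphi$ then yields a clean geometric estimate $\sum_{k\ge n}f(1/2^k)\le (C/(1-2^{-\alpha}))f(1/2^n)$; multiplying through by $2^{n\beta}$ bounds the left-hand side by a constant multiple of $2^{n\beta}f(1/2^n)=a_n$, which is equivalent to $\sum_{i=0}^n\mu(i)^\beta$, as needed, with small $n$ absorbed into $L'$. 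Lemma \ref{redu1} then delivers $\mathbf{E}_f\le_B\mathbf{E}_g=\mathbf{E}_{\rm Id^{\beta}}$.
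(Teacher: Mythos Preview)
Your proof is correct and follows essentially the same route as the paper's: both define $\mu(n)^\beta$ (equivalently $\kappa(1/2^n)^\beta$) as the successive differences $2^{n\gamma}\varphi(1/2^n)-2^{(n-1)\gamma}\varphi(1/2^{n-1})$ and then verify the three conditions of the reduction theorem, with the limit hypothesis guaranteeing positivity and the essential monotonicity of $\varphi$ yielding the geometric tail bound. The only packaging difference is that the paper first replaces $\varphi$ by an equivalent function that is constant on $[1/2^{n_0},1]$ and invokes Theorem~\ref{redu} directly, whereas you leave $\varphi$ alone, set $\mu(n)=1$ for $n<n_0$, and invoke Lemma~\ref{redu1}; also, in your Abel summation the coefficient on $f(1/2^{n-1})$ should be $2^{-\beta}$ rather than $2^{-\gamma}$, but this is immaterial to the estimate.
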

\begin{proof}
Let $\lambda=2^{\alpha-\beta}$.
Fix a $\varepsilon<\min\{1, 1/\lambda-1\}$, since $\lim_{n\to \infty }\varphi(1/2^n)/\varphi(1/2^{n+1})=1$, there is $n_0<\omega$ such that for $n\ge n_0$, we have
$1-\varepsilon \le \varphi(1/2^n)/\varphi(1/2^{n+1}) \le 1+\varepsilon$. Denote $M=\max\{\varphi(x):  x\in [0,1]\}$, then $M>0$.

Let $\psi: [0,1] \to \mathbb{R}^+$ defined by
\begin{equation*}
\psi(x)=
\begin{cases}
\varphi(1/2^{n_0})/M, & x \ge 1/2^{n_0} ,\\
\varphi(x)/M, & x < 1/2^{n_0}.
\end{cases}
\end{equation*}
It is easy to check that $(\varphi(x))_{x\in [0,1]} \approx (\psi(x))_{x\in [0,1]}$ and $\psi$ is continuous essentially increasing. Let $C \ge 1$ witness that $\psi(x) \le C\psi(y)$ for $x \le y$. Set $g(x)=x^\alpha\psi(x)$ for $x\in [0,1]$, then  $\mathbf{E}_f=\mathbf{E}_g$. To show $\mathbf{E}_g \le_B \mathbf{E}_{\rm Id^{\beta}}$, by Theorem \ref{redu}, we only need to find a function $\kappa :\{1/2^i: i<\omega\} \to [0,1]$ and $L \ge 1$ such that for each $n<\omega$,
\begin{enumerate}
  \item [(i)] $g(1/2^n)=\sum_{i=0}^n (\kappa(1/2^i)/2^{n-i})^\beta$;
  \item [(ii)] $\sum_{i=n}^\infty \kappa(1/2^i)^\beta \le L\sum_{i=0}^n (\kappa(1/2^i)/2^{n-i})^\beta=Lg(1/2^n)$;
  \item [(iii)] $\kappa(1/2^n) \le L \cdot \max \{\kappa(1/2^i)/2^{n-i}: i<n\}$.
\end{enumerate}

To satisfy (i), we  have to define $\kappa(1)^\beta=g(1)\le 1$ and for $n>0$,
\[
\kappa(1/2^n)^\beta=g(1/2^n)-g(1/2^{n-1})/2^\beta=(\psi(1/2^n)-\lambda\psi(1/2^{n-1}))/2^{n\alpha}.
\]
For $0<n \le n_0$,
\[
\kappa(1/2^n)^\beta=(1-\lambda)/2^{n\alpha}\cdot \varphi(1/2^{n_0})/M\in(0,1),
\]
for $n>n_0$,
\begin{align*}
\kappa(1/2^n)^\beta&=(1-\lambda\cdot \psi(1/2^{n-1})/\psi(1/2^n))\cdot \psi(1/2^n)/2^{n\alpha}\\
&\in (1-\lambda(1+\varepsilon), 1-\lambda(1-\varepsilon)) \cdot \psi(1/2^n)/2^{n\alpha} \subseteq (0,1).
\end{align*}
Therefore $\kappa(1/2^n)$ is well defined.

From the definition of $\kappa$, we have $\kappa(1/2^n)^\beta \le g(1/2^n)=\psi(1/2^n)/2^{n\alpha}$ for each $n<\omega$, therefore
\[
\sum_{i=n}^\infty \kappa(1/2^i)^\beta \le \sum_{i=n}^\infty \psi(1/2^i)/2^{i\alpha} \le \sum_{i=n}^\infty C\psi(1/2^n)/2^{i\alpha}=C/(1-2^{-\alpha})\cdot g(1/2^n).
\]

For (iii), if $0<n \le n_0+1$, we can find $L_1 \ge 1$ easily such that $\kappa(1/2^n) \le L_1 \cdot \kappa(1/2^{n-1})/2$; if  $n \ge n_0+2$, then
\begin{align*}
\kappa(1/2^n)^\beta&=(1-\lambda\psi(1/2^{n-1})/\psi(1/2^n)) \cdot \psi(1/2^n)/2^{n\alpha}\\
  &\le  (1-\lambda(1-\varepsilon)) \cdot \psi(1/2^n)/2^{n\alpha} \\
  &\le (1-\lambda(1-\varepsilon))/(1-\varepsilon) \cdot \psi(1/2^{n-1})/2^{n\alpha}\\
  &\le (1-\lambda(1-\varepsilon))/((1-\varepsilon)(1-\lambda(1+\varepsilon)))\\
   & \cdot (1-\lambda \psi(1/2^{n-2})/\psi(1/2^{n-1}))\psi(1/2^{n-1})/2^{n\alpha}\\
   &=(1-\lambda(1-\varepsilon))/(2(1-\varepsilon)(1-\lambda(1+\varepsilon))) \cdot \kappa(1/2^{n-1})^\beta.
\end{align*}

 Let $L=\max\{C/(1-2^{-\alpha}), L_1,  2[(1-\lambda(1-\varepsilon))/(2(1-\varepsilon)(1-\lambda(1+\varepsilon)))]^{1/\beta}\}$, then (ii) and (iii) are satisfied.
\end{proof}

\section{Embeddings of $P(\omega)/{\rm Fin}$ into $[\ell_\alpha, \ell_\beta]$}

In this section we will establish our main theorem. First we define a sequence on $(0,1]$ which has many nice properties.

\begin{lemma}\label{un}
Let $(\delta_m)_{m<\omega}$ be a sequence on $(0,1)$ satisfying $0<\inf_m \delta_m \le \sup_m \delta_m <1$ and $(k_m)_{m<\omega}$  an increasing sequence on $\omega\backslash \{0,1\}$ satisfying $k_{m+1} \ge 2k_m$ for each $m<\omega$.  Define $u_n$  by $u_0=1$ and for $n>0$,
\begin{equation*}
u_n=
\begin{cases}
\delta_m u_{n-1}, & n=k_m,\\
u_{n-1}, & {\rm otherwise}.
\end{cases}
\end{equation*}
Denote $\delta=\inf_m \delta_m$ and $\Delta=\sup_m \delta_m$. Then:
\begin{enumerate}
\item[{\rm(i)}] For each $m<\omega$, $u_{k_m}=\prod_{0 \le i \le m } \delta_i $, so if $k_m \le n <k_{m+1}$, then $u_n= u_{k_m}=\prod_{0 \le i \le m } \delta_i$;
\item[{\rm(ii)}] $u_{n} \ge u_{n+1}$ for each $n<\omega$ and $u_n \to 0$ as $n \to \infty$;
\item[{\rm(iii)}] $u_{2n} \ge \delta u_n$ for each $n<\omega$.
\end{enumerate}
\end{lemma}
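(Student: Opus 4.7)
The plan is to handle the three claims in order, with (i) and (ii) being essentially formal and (iii) being the substantive point.

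For (i), I proceed by induction on $m$. The recursion says $u_n = u_{n-1}$ whenever $n \notin \{k_i : i < \omega\}$, so the sequence is constant on each interval $[k_m, k_{m+1}) \cap \omega$ and on $\{0, 1, \dots, k_0 - 1\}$ (using $k_0 \ge 2$). At index $n = k_m$, it multiplies by $\delta_m$. Since $u_0 = 1$, this gives $u_{k_m} = \prod_{i \le m} \delta_i$ and $u_n = u_{k_m}$ for $k_m \le n < k_{m+1}$.

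For (ii), monotonicity is immediate from (i) and $\delta_m \in (0,1)$. For $u_n \to 0$, use (i) to estimate $u_{k_m} = \prod_{i \le m} \delta_i \le \Delta^{m+1}$, and since $\Delta < 1$ and $k_m \to \infty$, the monotone sequence $u_n$ tends to $0$.

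Part (iii) is the main point, and the key is to use the gap condition $k_{m+1} \ge 2k_m$ to show that doubling the index $n$ moves it past at most one of the $k_m$'s. Given $n \ge 1$, I distinguish two cases: either $n < k_0$, in which case $u_n = 1$ and $2n < 2k_0 \le k_1$, so $u_{2n} \in \{1, \delta_0\} \subseteq [\delta, 1]$; or there exists a unique $m$ with $k_m \le n < k_{m+1}$, so $u_n = u_{k_m}$. In this second case, $2n \in [2k_m, 2k_{m+1})$, and the hypothesis $k_{m+2} \ge 2k_{m+1}$ yields $2n < k_{m+2}$. Thus $2n$ lies either in $[k_m, k_{m+1})$ (giving $u_{2n} = u_n$) or in $[k_{m+1}, k_{m+2})$ (giving $u_{2n} = \delta_{m+1} u_n$). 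In either subcase, $u_{2n}/u_n \in \{1, \delta_{m+1}\} \ge \delta$. The case $n = 0$ is trivial since $u_0 = 1 \ge \delta$.

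The only real obstacle is making sure the gap $k_{m+1} \ge 2k_m$ is exploited cleanly to rule out $2n$ crossing two or more transition points; once that is isolated as a one-line observation ($2n < 2k_{m+1} \le k_{m+2}$), the estimate falls out immediately from (i).
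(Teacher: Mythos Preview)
Your proof is correct and follows essentially the same approach as the paper. The only cosmetic difference is in part (iii): the paper first locates $2n$ in an interval $[k_m,k_{m+1})$ and deduces $n\in[k_{m-1},k_{m+1})$ from $k_m\ge 2k_{m-1}$, whereas you locate $n$ first and bound $2n<2k_{m+1}\le k_{m+2}$; these are mirror images of the same one-step observation.
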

\begin{proof}
To show (i), we do it by induction on $m$. If $m=0$, then $u_{k_0}=\delta_0$. Assume for $m = n , u_{k_n}= \prod_{0 \le i \le n } \delta_i$, then for $m=n+1$,
\[
u_{k_{n+1}}=\delta_{n+1} u_{k_{n+1}-1}=\delta_{n+1} u_{k_n}= \prod_{0 \le i \le n+1 } \delta_i,
\]
therefore (i) holds. Since $\Delta<1$, it is easy to see that (ii) is satisfied.

For (iii), if $2n < k_0$, then $u_n=u_{2n}=1$, otherwise there is $m<\omega$ such that
\[
k_m \le 2n < k_{m+1}.
\]
If $m=0$, by (i), $u_{2n}=\delta_0$ and $u_n=1$ or $u_n=\delta_0$, therefore $u_{2n} \ge \delta u_n$.  If $m>0$,
then $u_{2n}= u_{k_m}$ and
\[
k_{m-1} \le n < k_{m+1},
\]
if $n \ge k_m$, then $u_n= u_{k_m}$, otherwise $u_n= u_{k_{m-1}}$, since $u_{k_m}= \delta_{m} u_{k_{m-1}}$, therefore $u_{2n} \ge \delta u_n$ and (iii) is satisfied.
\end{proof}

If we select a subsequence $(k_{m_l})_{l<\omega}$ of $(k_m)_{m<\omega}$, and define  $u_n$  by $u_0=1$ and
\begin{equation*}
u_n=
\begin{cases}
\delta_{m_l} u_{n-1}, & n=k_{m_l},\\
u_{n-1}, & {\rm otherwise}.
\end{cases}
\end{equation*}
It is easy to see  that $(u_n)_{n<\omega}$ still satisfies (i)-(iii) in Lemma \ref{un}.

Now we are going to show that $P(\omega)/{\rm Fin}$ can be embedded into the set of Borel equivalence relations of  $\mathbf{E}_f$'s. For $l<\omega$, set $a_0=0$ and for $l>0$, let $a_l=a_{l-1}+1+ (l-1)a_{l-1}$. Set $I_l= [a_l, a_{l+1}) \cap \omega$. Let $(\delta_m)_{m<\omega}$ be a sequence on $(0,1)$ satisfying $0<\inf_m \delta_m \le \sup_m \delta_m <1$ and $(k_m)_{m<\omega}$  an increasing sequence on $\omega\backslash \{0,1\}$ satisfying $k_{m+1} \ge 2k_m$ for each $m<\omega$. Denote $\delta=\inf_m \delta_m$ and $\Delta=\sup_m \delta_m$. For every $U \in P(\omega)$, set $u_U(0)=1$ and for each $n>0$, define $u_U(n)$ by:
\begin{equation*}
u_U(n)=
\begin{cases}
\delta_m u_U(n-1), & n=k_m, m \in I_l, l \in U,\\
u_U(n-1), & \text{otherwise}.
\end{cases}
\end{equation*}
Then we have the following theorem:

\begin{theorem}\label{embedEf}
  Let $\alpha \ge 1 $, for each $U \in P(\omega)$ and $n<\omega$, let $u_U(n), I_n,  a_n,  k_n, \delta_n$  and $\delta, \Delta$ defined as above. Let $\varphi: [0,1] \to \mathbb{R}^{+}$ be  continuous essentially increasing with $\varphi(x)>0$ for $x>0$,
   and there is $\lambda>0$ such that $\varphi(x^2)\ge \lambda \varphi(x)$ for $x\in [0,1]$. For every $U \in P(\omega)$, let $\varphi_U: [0,1] \to \mathbb{R}^+$ be a continuous increasing function with  $\varphi_U(1/2^n)=u_U(n)$ for each $n<\omega$. Set $f_U(x)=x^\alpha\varphi(x) \varphi_U(x)$ for $x\in [0,1]$. Then $\mathbf{E}_{f_U}$ is an equivalence relation and for $U, ~V \in P(\omega)$, we have
\[
U\subseteq^{*}V\Longleftrightarrow \mathbf{E}_{f_U} \le_B \mathbf{E}_{f_V}.
\]
\end{theorem}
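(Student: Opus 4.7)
The plan is to treat the three assertions separately: first that $\mathbf{E}_{f_U}$ is an equivalence relation, then the two directions of the equivalence. For the first assertion, I set $\Phi_U := \varphi \cdot \varphi_U$, observe that $\Phi_U$ is essentially increasing on $[0,1]$ (product of the essentially increasing $\varphi$ with the increasing $\varphi_U$) with $\Phi_U(1/2)>0$, and combine the hypothesis $\varphi(1/2^{2n}) \ge \lambda \varphi(1/2^n)$ with Lemma \ref{un}(iii) $u_U(2n) \ge \delta u_U(n)$ to get $\Phi_U(1/2^{2n}) \ge \lambda \delta\, \Phi_U(1/2^n)$. Lemma \ref{A1} then delivers the conclusion.

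For the forward implication ($U \subseteq^{*} V \Rightarrow \mathbf{E}_{f_U} \le_B \mathbf{E}_{f_V}$), the plan is to apply Lemma \ref{redu1} through Propositions \ref{apphold}, \ref{changemu}, and \ref{changemu2hold}. Set $R(n) := u_U(n)/u_V(n)$. Since $U \setminus V$ is finite, there is an $n_0$ past which $R$ is non-decreasing. For $i \ge n_0$ define $\mu(i)^{\alpha} := R(i) - R(i-1)$, which vanishes unless $i = k_m$ with $m \in I_l$, $l \in V \setminus U$, and in that case equals $R(i-1)(\delta_m^{-1}-1)$; use Proposition \ref{changemu} to patch $\mu(i)=1$ for $i<n_0$. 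Telescoping gives $\varphi_U(1/2^n)/\varphi_V(1/2^n) \approx \sum_{i\le n}\mu(i)^{\alpha}$, which is hypothesis \eqref{apphold1} of Proposition \ref{apphold} applied with $\varphi \rightsquigarrow \Phi_U$ and $\psi \rightsquigarrow \Phi_V$. Hypothesis \eqref{changemu2} is checked via Corollary \ref{changemu2holdrem}: the lower bound $\mu(i) \ge 1/M$ follows from $R(i-1)\ge R(n_0)>0$ and $\delta_m^{-1}-1 \ge \Delta^{-1}-1$, while the upper bound $\mu(i) \le 2^{\varepsilon n}$ uses that $k_{m+1}\ge 2 k_m$ forces at most $O(\log i)$ factors $\delta_m^{-1}$ to have been accumulated by index $i$, so $R(i)$ grows at most polynomially. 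Conditions \eqref{redu12} and \eqref{redu13} of Lemma \ref{redu1} then follow by direct geometric-series estimates: consecutive nonzero $\mu$-values occur at $k_m < k_{m'}$ with $k_{m'}\ge 2k_m$, which dominates the tail in \eqref{redu12} via $2^{-\alpha(k_{m'}-k_m)}$, and the consecutive ratios $\mu(k_{m'})^{\alpha}/\mu(k_m)^{\alpha} = \delta_m^{-1}\cdot(\delta_{m'}^{-1}-1)/(\delta_m^{-1}-1)$ are bounded by a constant depending only on $\delta,\Delta$, giving \eqref{redu13}.

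For the reverse implication ($U \not\subseteq^{*} V \Rightarrow \mathbf{E}_{f_U} \not\le_B \mathbf{E}_{f_V}$), the plan is to apply Theorem \ref{inredu} under the weakened condition ${\rm(A_2)'}$, with $\Phi_V$ playing the role of $\varphi$ and $\Phi_U$ the role of $\psi$. Condition ${\rm(A_1)}$ for $\Phi_V$ is Lemma \ref{A1} again. The crucial step is to establish $\liminf_{n \to \infty} R(n) = 0$, and this is what the recursion $a_{l+1} = (l+1)a_l + 1$ is engineered to make possible: it yields $|I_l| = l\cdot a_l + 1 \ge l\cdot \sum_{l'<l}|I_{l'}|$. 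Pick any $l \in U\setminus V$ (infinitely many such) and set $n_l := k_{a_{l+1}-1}$; expanding the product for $R(n_l)$ and using $\Delta < 1$ to bound the $U\setminus V$-contribution while $\delta<1$ bounds the $V\setminus U$-contribution gives
\[
R(n_l) \le \Delta^{|I_l|}\,\delta^{-a_l} \;=\; \Delta\cdot(\Delta^l/\delta)^{a_l},
\]
which tends to $0$ as $l\to\infty$ because $\Delta^l/\delta < 1$ eventually while $a_l\to\infty$. Theorem \ref{inredu} under ${\rm(A_2)'}$ then gives $\mathbf{E}_{f_U} \not\le_B \mathbf{E}_{f_V}$.

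The hard part is the forward direction: the bookkeeping needed to define $\mu$ so that all four conditions \eqref{apphold1}, \eqref{changemu2}, \eqref{redu12}, \eqref{redu13} hold simultaneously with uniform constants. The reverse direction reduces to a clean combinatorial estimate once one sees why the peculiar recursion $a_l = l\, a_{l-1} + 1$ and the spacing $k_{m+1}\ge 2 k_m$ were chosen as they were.
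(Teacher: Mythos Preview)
Your proposal is correct and follows the paper's approach closely: Lemma~\ref{A1} for the equivalence-relation claim, Lemma~\ref{redu1} fed by Propositions~\ref{apphold}--\ref{changemu} and Corollary~\ref{changemu2holdrem} with the same telescoping choice $\mu(i)^\alpha = R(i)-R(i-1)$ for the forward direction, and Theorem~\ref{inredu} under $(A_2)'$ for the reverse. The only differences are in execution: for \eqref{redu12} the paper picks a truncation index $m\ge n$ with $R(n+m)\asymp R(n)$ and splits head/tail, whereas your geometric-series argument (consecutive-term ratio bounded by $C_0\cdot 2^{-\alpha k_m}$, combining your \eqref{redu13} bound with the spacing $k_{m'}\ge 2k_m$) also works but needs the remark that this ratio is $<1$ only once $k_m$ is large, so small $n$ still requires a separate finite check; and for $(A_2)'$ the paper interleaves elements of $U\setminus V$ and $V\setminus U$ to force $u_l\ge p+1+2l$, while your direct bound $R(n_l)\le \Delta\,(\Delta^l/\delta)^{a_l}$ along $l\in U\setminus V$ is cleaner and needs no interleaving.
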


\begin{proof}
From Lemma \ref{un} (iii), we have $\varphi_U(1/2^{2n}) \ge \delta \varphi_U(1/2^n)$ for each $n<\omega $,
 therefore
\[
\varphi(1/2^{2n}) \varphi_U(1/2^{2n})\ge \delta \lambda \varphi(1/2^{n})\varphi_U(1/2^{n}),
\]
since $\varphi\varphi_U$ is an essentially increasing function on $[0,1]$
 and $\varphi(1/2)\varphi_U(1/2)>0$, by Lemma \ref{A1}, $\mathbf{E}_{f_U}$ is an equivalence relation.

 For $U, V \in P(\omega)$,  to show
  \[
 U\subseteq^{*}V  \Longleftrightarrow \mathbf{E}_{f_U} \le_B \mathbf{E}_{f_V}.
  \]
 We only need to consider three cases.

 \textbf{Case 1:} {\it For $U, ~V \in P(\omega)$, if $U\subseteq^{*}V$ and $V\subseteq^{*}U$, then $\mathbf{E}_{f_U} = \mathbf{E}_{f_V}$.}

 Since $\varphi(x)$ is essentially increasing on $[0,1]$, there is  $C\ge 1$ such that $\varphi(x) \le C\varphi(y)$ for $0 \le x\le y\le 1$. For $U, ~V \in P(\omega)$, if $U\subseteq^{*}V$ and $V\subseteq^{*}U$, then $U,~V$ differ only by a finite set, so $(u_U(n))_{n<\omega} \approx (u_V(n))_{n<\omega}$, therefore $(f_U(1/2^n))_{n<\omega} \approx (f_V(1/2^n))_{n<\omega}$.

 From Lemma \ref{un} (iii), for each $0<n<\omega$, we have:
\begin{align*}
f_U(1/2^{n+1})&=1/2^{(n+1)\alpha} \cdot \varphi(1/2^{n+1}) \varphi_U(1/2^{n+1})\\
&\ge 1/(2^{(n+1)\alpha}C)\cdot \varphi(1/2^{2n})\varphi_U(1/2^{2n})\\
& \ge \lambda\delta/(2^\alpha C) \cdot \varphi(1/2^{n}) \varphi_U(1/2^{n})/2^{n\alpha}\\
&= \lambda\delta/(2^\alpha C) \cdot f_U(1/2^{n}),
\end{align*}
for $n=0$,
\[
f_U(1/2)=1/2^{\alpha} \cdot \varphi(1/2)\ge \varphi(1/2)/(\varphi(1)2^{\alpha}) \cdot f_U(1).
\]

If $x \in [1/2^{n+1}, 1/2^n]$ for some $n<\omega$, then
\[
f_V(1/2^{n+1})/C \le f_V(x) \le Cf_V(1/2^n),
\]
by Proposition \ref{funseq1},  $(f_U(x))_{x\in(0,1]} \approx (f_V(x))_{x\in(0,1]}$. Since $\varphi\varphi_U, \varphi\varphi_V$ is an essentially increasing function on $[0,1]$, therefore $f_U(0)= f_V(0)=0$, so $(f_U(x))_{x\in [0,1]} \approx (f_V(x))_{x\in[0,1]}$ and $\mathbf{E}_{f_U} = \mathbf{E}_{f_V}$.

\textbf{Case 2:} {\it For $U, ~V \in P(\omega)$, if $U\subseteq^{*}V$ with $|V\backslash U|=\infty$, then $\mathbf{E}_{f_U} <_B \mathbf{E}_{f_V}$.}

Let $U'=U \cap V $,  then $ U\subseteq^{*}U',  U'\subseteq^{*} U$ and $|V\backslash U'|=
\infty$. From Case 1, we know that $\mathbf{E}_{f_U} = \mathbf{E}_{f_{U'}}$. Without loss of generality, we can assume $U \subseteq V$ with $|V\backslash U|=\infty$, then $1 \ge u_U(n)\ge u_V(n)>0$ for each $n<\omega$.

Define $\mu: \omega \to \mathbb{R}$ with $\mu(0)=1$ and for $n>0$,
\[
\mu(n)^\alpha=\frac{\varphi_U(1/2^n)}{\varphi_V(1/2^n)}-\frac{\varphi_U(1/2^{n-1})}{\varphi_V(1/2^{n-1})}=
\frac{u_U(n)}{u_V(n)} -\frac{u_U(n-1)}{u_V(n-1)}.
\]
We are going to give several claims to prove Case 2.

\textbf{Claim 1:} {\it  $\mu(n) \ge 0$ for each $n<\omega$ and there exists $n_0<\omega$ such that for $n \ge n_0$, if $\mu(n) \neq 0$, then
$1/\Delta- 1 \le \mu(n)^\alpha \le 2^{n/2}$.}

{\it \noindent Proof of Claim 1.} If $u_U(n)=u_U(n-1), u_V(n)=u_V(n-1)$ or $u_U(n)=\delta_m u_U(n-1), u_V(n)=\delta_m u_V(n-1)$ for some $m<\omega$, then $\mu(n)=0$.

If $u_U(n)=u_U(n-1), u_V(n)=\delta_m u_V(n-1)$ for some $m<\omega$, then  we have
\[
\mu(n)^\alpha \ge  (1/\Delta-1)\cdot (u_U(n-1)/ u_V(n-1)) \ge 1/\Delta -1>0
\]
and
\[
\mu(n)^\alpha \le (1/\delta_m-1)/ u_V(n-1)=
(1-\delta_m)/u_V(n).
\]
Since $u_V(n)=\delta_m u_V(n-1)$, there is $l \in V$ such that  $m \in I_l$ and $n=k_m$. Let $I_V= \bigcup_{l\in V} I_l$ and assume $m$ is the $p$-th number in $I_V$, then $p \le m+1$.  By induction on  $m$, we have $k_m \ge 2^{m+1}$ , so $2^p \le k_m=n$. From Lemma \ref{un} (ii), we have
\[
1/u_V(n) \le (1/\delta)^p =2^{p\log_2(1/\delta)} \le n^{\log_2(1/\delta)},
\]
therefore $\mu(n)^\alpha \le n^{\log_2(1/\delta)}(1-\delta)$.
So there exists $n_0<\omega$ such that for $n \ge n_0$, if $\mu(n)\neq 0 $, then
\[
1/\Delta -1  \le \mu(n)^\alpha \le 2^{n/2}.
\]
 \hfill $\Box$ Claim 1

\textbf{Claim 2:} {\it There exist $ K_0 \ge 1$ and $n_1\ge n_0$ such that for $n \ge n_1$, we have  $0 \le \mu(n)\le 2^n $ and  $\mu, \varphi\varphi_U, \varphi\varphi_V$ satisfy the following requirements:
\begin{enumerate}
\item[{\rm (i)}] $ (\varphi(1/2^n)\varphi_U(1/2^n))_{n \ge n_1}\approx (\varphi(1/2^n)\varphi_V(1/2^n)\sum_{i=0}^n \mu(i)^\alpha)_{n \ge n_1}$,

\item[{\rm(ii)}] $ \varphi(1/2^n)\varphi_V(1/2^n)/K_0  \le \varphi(\mu(i)/2^n)\varphi_V(\mu(i)/2^n) \le K_0 \cdot \varphi(1/2^n)\varphi_V(1/2^n) \quad \\(n_1 \le i \le n <\omega,~ \mu(i)\neq 0)$.
\end{enumerate}

}
{\it \noindent Proof of Claim 2.} For each $n<\omega$,
 \[
 \varphi(1/2^n)\varphi_V(1/2^n)\sum_{i=0}^n \mu(i)^\alpha = \varphi(1/2^n)\varphi_U(1/2^n),
 \]
 so (i) holds.

For (ii) , if $\mu(i)\neq 0$ with $n_0 \le i \le n <\omega$, then
\[
1/\Delta -1 \le \mu(i)^\alpha \le 2^{n/2}.
\]
For each $n<\omega$, we have
\[
\varphi(1/2^{2n}) \varphi_V(1/2^{2n})\ge \lambda \delta \varphi(1/2^{n})\varphi_V(1/2^{n}).
\]
 Since $\varphi \varphi_V$ is an essentially increasing  function with $\varphi(x) \varphi_V(x)>0$ for $x>0$, by Corollary \ref{changemu2holdrem}, there exists $K_0 \ge 1$ and  $n_1\ge n_0$ such that
(ii) is satisfied.

\hfill $\Box$ Claim 2

Let $\nu: \omega \to \mathbb{R}^+$ with
\begin{equation*}
\nu(n)=
\begin{cases}
1, & n < n_1,\\
\mu(n), & n\ge n_1.
\end{cases}
\end{equation*}
Then  $0 \le \nu(n) \le 2^n$ for each $n<\omega$ and $\nu(0)=1$.
By Proposition  \ref{changemu}, we know that there is $K \ge K_0$ such that
\[
   \Big(\varphi(1/2^n)\varphi_U(1/2^n)\Big)_{n<\omega}\approx\Big( \varphi(1/2^n)\varphi_V(1/2^n)\sum_{i=0}^n \nu(i)^\alpha \Big)_{n<\omega}
\]
and
\begin{align}\label{equ2}
\frac{1}{K}\varphi(1/2^n)\varphi_V(1/2^n) &\le \varphi(\nu(i)/2^n)\varphi_V(\nu(i)/2^n) \\
&\le K\varphi(1/2^n)\varphi_V(1/2^n)~~(0 \le i \le n <\omega, \nu(i)\neq 0).\notag
\end{align}
By Proposition \ref{apphold}, we have
\[
   \Big(\varphi(1/2^n)\varphi_U(1/2^n)\Big)_{n<\omega}\approx\Big( \sum_{i=0}^n \nu(i)^\alpha \varphi(\nu(i)/2^n)\varphi_V(\nu(i)/2^n) \Big)_{n<\omega}.
\]

In Case 1, we have shown that for each $n<\omega$,
\[f_U(1/2^{n+1})\ge \min\{\lambda\delta/(2^\alpha C), \varphi(1/2)/(\varphi(1)2^\alpha) \}f_U(1/2^n),
 \]
 therefore there is $\gamma>0$ such that
\[
\varphi(1/2^{n+1})\varphi_U(1/2^{n+1}) \ge \gamma \varphi(1/2^{n})\varphi_U(1/2^{n}).
\]
Since $f_U, f_V, \varphi\varphi_U, \varphi\varphi_V$ are continuous essentially increasing functions on $[0,1]$ and $\mathbf{E}_{f_U}, \mathbf{E}_{f_V}$ are equivalence relations. By Lemma \ref{redu1}, to show $ \mathbf{E}_{f_U}\le_B \mathbf{E}_{f_V}$, we only have to check  the  following claim.

 \textbf{Claim 3:} {\it There is $L \ge 1$ such that $\nu, \varphi\varphi_V$ satisfy the following requirements for each $n<\omega$:
\begin{enumerate}
\item[{\rm (i)}] $\sum_{i=0}^{\infty}\dfrac{\nu(n+i)^{\alpha}}{ 2^{i\alpha}} \varphi\Big(\dfrac{\nu(n+i)}{2^{n+i}}\Big)\varphi_V\Big(\dfrac{\nu(n+i)}{2^{n+i}}\Big)  \le L  \sum_{i=0}^{n}\nu(i)^{\alpha} \varphi\Big(\dfrac{\nu(i)}{2^{n}}\Big)\varphi_V\Big(\dfrac{\nu(i)}{2^{n}}\Big)$,
\item[{\rm(ii)}] $\nu(n) \le L \cdot \max_{i<n}\nu(i)$.
\end{enumerate}
}
{\it \noindent Proof of Claim 3.} Set $n_2=\max\{n_1, k_0\}$, for $n \ge n_2$, let $l \in \omega$ satisfying $k_{a_l} \le n <k_{a_{l+1}}$. If $l \in U$, or $l \in \omega\backslash (U\cup V)$, or $l \in V\backslash U$ with $n \ge k_{a_{l+1}-2}$, then we choose $l'\in V\backslash U$ be the minimal natural number such that $l'>l$. Set $n'=k_{a_{l'}+1}$,  then $n'\ge 2n $ and
\[
1/\Delta \cdot u_U(n)/u_V(n) \le u_U(n')/u_V(n')\le (1/\delta)^3\cdot u_U(n)/u_V(n).
\]
If $l  \in V\backslash U$ with $n < k_{a_{l+1}-2}$, then there is $p \in [a_l, a_{l+1}-2) \cap \omega $ such that $k_p \le n <k_{p+1} $. Since $I_l=[a_l, a_{l+1})\cap \omega$, we know that $p+2\in I_l$, let $n'=k_{p+2}$, then $n'\ge 2n $ and
\[1/\Delta \cdot u_U(n)/u_V(n) \le u_U(n')/u_V(n')\le (1/\delta)^2\cdot u_U(n)/u_V(n).
\]
 Therefore for $n \ge n_2$, we can select $m \in \omega$ with $m \ge n$ such that
\[
1/\Delta \cdot u_U(n)/u_V(n) \le u_U(n+m)/u_V(n+m)\le  (1/\delta)^3 \cdot u_U(n)/u_V(n).
\]

Denote the left part of equation (i) by $ EQU_{l}^n$, and the right part of equation (i) by $ EQU_{r}^n$ (don't contain $L$). Note that for $n>0$, we have
\[
u_U(n)/u_V(n) \le 1/\delta \cdot u_U(n-1)/u_V(n-1).
\]
For $n \ge n_1$, if $\mu(n)\neq 0 $, then
\[
1/\Delta -1  \le \mu(n)^\alpha=\nu(n)^\alpha \le 2^{n/2}.
\]

By \eqref{equ2}, we have
\[
EQU_{l}^n \le K\sum_{i=0}^{\infty}\frac{1}{2^{i\alpha}}\nu(n+i)^{\alpha}\varphi\Big(\frac{1}{2^{n+i}}\Big)\varphi_V\Big(\frac{1}{2^{n+i}}\Big)
\]
and
\[
EQU_r^n \ge \frac{1}{K}\varphi(1/2^n)\varphi_V(1/2^n)\sum_{i=0}^n \nu(i)^\alpha.
\]
Therefore for $n \ge n_2$, we have
\begin{align*}
EQU_{l}^n &\le K\sum_{i=0}^{\infty}\frac{1}{2^{i\alpha}}\mu(n+i)^{\alpha}\varphi\Big(\frac{1}{2^{n+i}}\Big)\varphi_V\Big(\frac{1}{2^{n+i}}\Big)\\
&\le KC\sum_{i=0}^{m}\mu(n+i)^{\alpha}\varphi\Big(\frac{1}{2^{n}}\Big)\varphi_V\Big(\frac{1}{2^{n}}\Big)
+KC\sum_{i=m+1}^{\infty}\frac{1}{2^{i\alpha}}2^{(n+i)/2}\varphi\Big(\frac{1}{2^{n}}\Big)\varphi_V\Big(\frac{1}{2^{n}}\Big)\\
& \le \Big(\frac{u_U(n+m)}{u_V(n+m)}- \frac{u_U(n-1)}{u_V(n-1)}+ \frac{2^{n/2} }{2^{m(\alpha-1/2)}(2^{(\alpha-1/2)}-1)}\Big)KC\varphi\Big(\frac{1}{2^{n}}\Big)\varphi_V\Big(\frac{1}{2^{n}}\Big)\\
& \le\Big( (1/\delta^3 -\delta)\frac{u_U(n)}{u_V(n)} +\frac{1}{2^{(\alpha-1/2)}-1}\Big)KC\varphi\Big(\frac{1}{2^{n}}\Big)\varphi_V\Big(\frac{1}{2^{n}}\Big),
\end{align*}
and
\[
EQU_r^n \ge \Big(\frac{u_U(n)}{u_V(n)} -\frac{u_U(n_1-1)}{u_V(n_1-1)}+n_1\Big)\varphi(1/2^n)\varphi_V(1/2^n)/K.
\]

Since $U \subseteq V$ and $|V\backslash U|=\infty $, we have $u_U(n)/u_V(n)\to \infty$ as $n \to \infty$,  therefore there is a natural number $n_3\ge n_2$ and a positive real number $L_1\ge 1$ such that for $n \ge n_3$,
\[
(1/\delta^3 -\delta)\frac{u_U(n)}{u_V(n)} +\frac{1}{2^{(\alpha-1/2)}-1}
\le L_1\Big(\frac{u_U(n)}{u_V(n)}-\frac{u_U(n_1-1)}{u_V(n_1-1)}+n_1\Big).
\]
For $n_2 \le n <n_3$, since $u_U(n)/u_V(n) -u_U(n_1-1)/u_V(n_1-1)+n_1$ and $(1/\delta^3 -1)u_U(n)/u_V(n) +1/2^{(\alpha-1/2)-1}$ are positive, there is $L_2\ge 1$ such that for $n_2 \le n <n_3$,
\[
(1/\delta^3 -\delta)\frac{u_U(n)}{u_V(n)} +\frac{1}{2^{(\alpha-1/2)}-1}\le L_2\Big(\frac{u_U(n)}{u_V(n)}
-\frac{u_U(n_1-1)}{u_V(n_1-1)}+n_1\Big).
\]

Let $L_3=\max\{L_1CK^2, ~L_2CK^2\}$, then for $n \ge n_2$,
\[
EQU_{l}^n \le L_3\cdot EQU_{r}^n.
\]

For $n<n_2$, we have
\begin{align*}
   EQU_{l}^n &=  \sum_{i=0}^{n_2-1}\frac{1}{2^{i\alpha}}\nu(n+i)^{\alpha}\varphi\Big(\frac{\nu(n+i)}{2^{n+i}}\Big)\varphi_V\Big(\frac{\nu(n+i)}{2^{n+i}}\Big)  \\
    &+\sum_{i=0}^{\infty}\frac{1}{2^{(n_2+i)\alpha}}\nu(n+n_2+i)^{\alpha}\varphi\Big(\frac{\nu(n+n_2+i)}{2^{n+n_2+i}}\Big)\varphi_V\Big(\frac{\nu(n+n_2+i)}{2^{n+n_2+i}}\Big)\\
    &\le \sum_{i=0}^{n_2-1}\frac{1}{2^{i\alpha}}\nu(n+i)^{\alpha}\varphi\Big(\frac{\nu(n+i)}{2^{n+i}}\Big)\varphi_V\Big(\frac{\nu(n+i)}{2^{n+i}}\Big)  \\
    &+\frac{L_3}{2^{n_2\alpha}}
    \sum_{i=0}^{n+n_2}\nu(i)^{\alpha}\varphi\Big(\frac{\nu(i)}{2^{n+n_2}}\Big)\varphi_V\Big(\frac{\nu(i)}{2^{n+n_2}}\Big).
\end{align*}
It is easy to see that the right part of the formula is bounded, so there exists  $M>0$ such that $EQU_{l}^n \le M\varphi(1/2^{n_2})\varphi_V(1/2^{n_2})$ for each $n<n_2$. Note that $\nu(0)=1$ and $\varphi(x)\le C\varphi(y)$ for $x \le y$,  therefore
\begin{align*}
EQU_{l}^n &\le  M\varphi(1/2^{n_2})\varphi_V(1/2^{n_2})\\
&\le MC\nu(0)^{\alpha}\varphi\Big(\frac{\nu(0)}{2^{n}}\Big)\varphi_V\Big(\frac{\nu(0)}{2^{n}}\Big)\\
 & \le MC\sum_{i=0}^{n}\nu(i)^{\alpha} \varphi\Big(\frac{\nu(i)}{2^{n}}\Big)\varphi_V\Big(\frac{\nu(i)}{2^{n}}\Big)=MC\cdot EQU_r^n.
\end{align*}

Let $L_0=\max \{L_3, MC\}$, then for each $n<\omega$,
\[
EQU_{l}^n \le L_0\cdot EQU_{r}^n.
\]

For (ii), if $v(n)=0$, then $\nu(n) \le \nu(0)=1$, if $\nu(n)\neq 0$, then there is $l \in V\backslash U$ and $m \in I_l$ with $n=k_m$. Set
 \[
 A_l= \Big\{l': l'<l, ~l' \in V\backslash U, ~ \forall m' \in I_{l'}(k_{m'} \ge n_1)\Big\}.
 \]
Without loss of generality, we can assume that $l$ is large enough such that the set $A_l\neq \emptyset$

Let $I_{V\backslash U}=\bigcup_{l \in V\backslash U}I_l$, choose $m'$ to be the largest number in $I_{V\backslash U}$ such that $m'<m$. More precisely,
if $m=a_l$, then let $l'$ be the largest number of $A_l$ and  $m'=a_{l'+1}-1$, if $m>a_l$, let  $m'=m-1\ge a_l$. Set $n'=k_{m'}$, then
 \begin{align*}
 \nu(n)^\alpha= \mu(n)^\alpha &= u_U(n-1)/(\delta_m u_V(n-1))-  u_U(n-1)/ u_V(n-1)\\
  &\le (1/\delta -1) u_U(n-1)/ u_V(n-1)=  (1/\delta -1) u_U(n')/ u_V(n')\\
  &=(1/\delta -1)u_U(n'-1)/ (\delta_{m'} u_V(n'-1))\\
  &\le 1/\delta \cdot (1/\delta -1) u_U(n'-1)/ u_V(n'-1)
 \end{align*}
 and
 \begin{align*}
 \nu(n')^\alpha =\mu(n')^\alpha &=u_U(n'-1)/(\delta_{m'} u_V(n'-1))-  u_U(n'-1)/ u_V(n'-1)\\
 &\ge(1/\Delta -1) u_U(n'-1)/ u_V(n'-1).
 \end{align*}
 Therefore $ \nu(n)^\alpha \le \Delta(1-\delta)/(\delta^2 (1-\Delta)) \cdot \nu(n')^\alpha$.  

 Set $L=\max\{L_0,(\Delta(1-\delta)/(\delta^2 (1-\Delta)))^{1/\alpha} \}$, then for each $n<\omega$, (i) and (ii) in Claim 3 hold.

\hfill $\Box$ Claim 3

By  Lemma \ref{redu1},  we have $ \mathbf{E}_{f_U}\le_B \mathbf{E}_{f_V}$.  $\mathbf{E}_{f_V} \nleq_B\mathbf{E}_{f_U}$ is an easy corollary of the following Case 3. Therefore we have $\mathbf{E}_{f_U} <_B \mathbf{E}_{f_V}$.

\textbf{Case 3:} {\it For $U, ~V \in P(\omega)$, if $|U\backslash V|=\infty$ and $|V\backslash U|=\infty$, then $\mathbf{E}_{f_U}$ and $\mathbf{E}_{f_V}$ are Borel incomparable.}

For $U \in P(\omega)$ and $n<\omega$, by Lemma \ref{un} (iii), we have
\[
\varphi(1/2^{2n}) \varphi_U(1/2^{2n})\ge \lambda \delta \varphi(1/2^{n})\varphi_U(1/2^{n}) .
\]
From Lemma \ref{A1}, we know that $\varphi\varphi_U, \varphi\varphi_V$ satisfies condition (${\rm A_1}$) in Theorem \ref{inredu}. Since $0<\delta =\inf_m \delta_m \le \Delta= \sup_m \delta_m<1$, there is a natural number $p\ge 1$ such that $\Delta^p \le  \delta$. By assumption,  $|U\backslash V|=\infty$ and $|V\backslash U|=\infty$,  we can choose two strictly increasing sequences $(u_l)_{l<\omega}$ and $(v_l)_{l<\omega}$ such that for each $l<\omega$,
\begin{enumerate}
  \item [\rm (i)] $u_l \in U\backslash V$ and $v_l \in V\backslash U$,
  \item [\rm (ii)] $u_l < v_l <u_{l+1}$ and $u_0 \ge p+1$.
\end{enumerate}

From (ii), we know that $u_l \ge p+1+2l$ and $v_l \ge p+2+2l$ for each $l<\omega$. Set $m_l=a_{u_l+1}-1, ~n_l=a_{v_l+1}-1$ for each $l<\omega$. Since  $a_{u_l+1}=a_{u_l}+1+u_l a_{u_l}$,   therefore for each $l<\omega$,
\[
\frac{u_U(k_{m_l})}{u_V(k_{m_l})}\le \frac{\Delta^{u_l a_{u_l}}}{\delta^{ a_{u_l}} } \le \frac{\Delta^{(p+1+2l)a_{u_l}}}{\delta^{a_{u_l}} } \le (\frac{\Delta^p}{\delta})^{a_{u_l}}\cdot \Delta^{2l} \le \Delta^{2l},
\]
similarly, we have $u_V(k_{n_l})/u_U(k_{n_l})\le \Delta^{2l}$ for each $l<\omega$. Let $x_l=1/2^{k_{m_l}}$ and $y_l=1/2^{k_{n_l}}$ for $l<\omega$, then
\[
\lim_{l\to \infty}\varphi_U(x_l)/ \varphi_V(x_l)=0, \lim_{l\to \infty}\varphi_V(y_l)/ \varphi_U(y_l)=0.
\]
So condition ${\rm (A_2)'}$ in Theorem \ref{inredu} holds, therefore, $\mathbf{E}_{f_U}$ and $\mathbf{E}_{f_V}$ are Borel incomparable.

Cases 1-3 show that $P(\omega)/{\rm Fin}$ can be embedded into $\mathbf{E}_{f}$'s and we complete the proof.
\end{proof}

\begin{lemma}\label{embedlb}
 Let $\alpha \ge 1 $,  for each $U \in P(\omega)$ and $n<\omega$, let $u_U(n), I_n, a_n, k_n, \delta_n$  and $\delta, \Delta$ defined as in Theorem \ref{embedEf}. Let $\varphi_U: [0,1] \to \mathbb{R}^+$ be  continuous increasing with  $\varphi_U(1/2^n)=u_U(n)$ for each $n<\omega$. Set $f_U(x)=x^\alpha\varphi_U(x)$ for $x\in [0,1]$. Then for each $\alpha< \beta <\infty$, we have
\[
\mathbb{R}^\omega/\ell_\alpha \le_B \mathbf{E}_{f_U} \le_B \mathbb{R}^\omega/\ell_\beta.
\]
\end{lemma}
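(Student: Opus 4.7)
The plan is to prove each inequality separately.

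For the lower bound $\mathbb{R}^\omega/\ell_\alpha \le_B \mathbf{E}_{f_U}$, I would invoke Theorem \ref{embedEf} with the constant weight $\varphi \equiv 1$, which satisfies all its hypotheses (in particular $\varphi(x^2) \ge 1 \cdot \varphi(x)$). Under this choice the $f_U$ of the present lemma coincides with the $f_U$ of Theorem \ref{embedEf}, so that theorem already guarantees $\mathbf{E}_{f_U}$ is an equivalence relation and that $\mathbf{E}_{f_V} \le_B \mathbf{E}_{f_U}$ whenever $V \subseteq^{*} U$. Taking $V = \emptyset$ and noting $u_\emptyset(n) \equiv 1$, one may take $\varphi_\emptyset \equiv 1$, so $f_\emptyset(x) = x^\alpha$ and $\mathbf{E}_{f_\emptyset} = [0,1]^\omega/\ell_\alpha \sim_B \mathbb{R}^\omega/\ell_\alpha$. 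This delivers the lower bound.

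For the upper bound $\mathbf{E}_{f_U} \le_B \mathbb{R}^\omega/\ell_\beta$, the natural tool is Lemma \ref{lb}; however its hypothesis $\lim_{n\to\infty} \varphi(1/2^n)/\varphi(1/2^{n+1}) = 1$ fails outright for $\varphi_U$, since this ratio equals $1/\delta_m \in [1/\Delta, 1/\delta]$ at every active special point $n+1 = k_m$ with $m \in I_l$, $l \in U$. My strategy is to replace $\varphi_U$ by an equivalent function $\tilde{\varphi}_U$ in which each jump is ``smeared'' geometrically across the long gap up to the next active special point. Enumerating the active special points as $s_1 < s_2 < \cdots$ with $s_j = k_{m_j}$, I set
\[
\tilde{u}_U(n) = u_U(s_j) \cdot \delta_{m_{j+1}}^{(n - s_j)/(s_{j+1} - s_j)} \quad (s_j \le n \le s_{j+1}),
\]
leaving $\tilde{u}_U(n) = u_U(n)$ outside the smearing range (for $n < s_1$, and past the final active special point if $U$ is finite); then let $\tilde{\varphi}_U$ be the continuous increasing interpolation with $\tilde{\varphi}_U(1/2^n) = \tilde{u}_U(n)$, and put $\tilde{f}_U(x) = x^\alpha \tilde{\varphi}_U(x)$.

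Three checks then close the argument. First, $\delta \cdot u_U(n) \le \tilde{u}_U(n) \le u_U(n)$, which combined with Proposition \ref{funseq1} yields $\tilde{\varphi}_U \approx \varphi_U$ on $[0,1]$ and hence $\mathbf{E}_{\tilde{f}_U} = \mathbf{E}_{f_U}$. Second, the hypothesis $k_{m+1} \ge 2 k_m$ forces $s_{j+1} - s_j \to \infty$, while $\delta_{m_{j+1}} \in [\delta, \Delta]$, so $\tilde{u}_U(n+1)/\tilde{u}_U(n) = \delta_{m_{j+1}}^{1/(s_{j+1} - s_j)} \to 1$, giving Lemma \ref{lb}'s ratio hypothesis. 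Third, combining $\tilde{\varphi}_U \approx \varphi_U$ with Lemma \ref{un}(iii) yields $\tilde{\varphi}_U(1/2^{2n}) \ge \delta' \tilde{\varphi}_U(1/2^n)$ for some $\delta' > 0$, so by Lemma \ref{A1} $\mathbf{E}_{\tilde{f}_U}$ is an equivalence relation. Lemma \ref{lb} then produces $\mathbf{E}_{\tilde{f}_U} \le_B \mathbb{R}^\omega/\ell_\beta$, completing the upper bound. The principal technical obstacle is the smearing step: one must spread each multiplicative drop $\delta_{m_{j+1}}$ over an interval long enough that the incremental ratios tend to $1$, which is precisely what the exponential growth assumption $k_{m+1} \ge 2 k_m$ accommodates.
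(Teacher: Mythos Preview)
Your proof is correct and follows the same overall strategy as the paper: the lower bound is obtained exactly as you describe, and the upper bound is obtained by replacing $\varphi_U$ by an equivalent function whose jumps are smeared out so that Lemma~\ref{lb} applies. The execution differs in two minor respects. First, for the upper bound the paper invokes Theorem~\ref{embedEf} a second time, using $U \subseteq^{*} \omega$ to get $\mathbf{E}_{f_U} \le_B \mathbf{E}_{f_\omega}$, and then only constructs a smeared replacement $\psi_\omega$ for the single function $\varphi_\omega$; you instead smear $\varphi_U$ directly for each $U$. Second, the paper interpolates \emph{affinely} in the variable $n$ between consecutive special points $k_m$, whereas you interpolate \emph{geometrically} between consecutive active points $s_j$; both choices yield $\tilde u(n)/\tilde u(n+1)\to 1$ for the same reason, namely that $k_{m+1}-k_m\ge k_m\to\infty$ forces the gaps to diverge. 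The paper's route is slightly more economical (one smearing rather than a $U$-dependent family), while yours shows directly that each $\varphi_U$ is equivalent to a function meeting the hypotheses of Lemma~\ref{lb}. One small point to make explicit in your write-up: your smearing spreads the drop at $s_{j+1}$ over $[s_j,s_{j+1}]$, so the first drop at $s_1$ remains unsmeared; this single bad ratio does not affect the limit, but should be acknowledged.
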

\begin{proof}
From Theorem \ref{embedEf}, we have
\[
\mathbf{E}_{f_\emptyset} \le_B \mathbf{E}_{f_U} \le_B \mathbf{E}_{f_\omega}.
\]

If $U=\emptyset$, then $\varphi_U(1/2^n)=1$ for $n<\omega$, since $\varphi_U$ is continuous increasing in $[0,1]$, it follows $\varphi_\emptyset \equiv 1$, therefore $f_\emptyset(x)=x^\alpha$ for $x \in [0,1]$,  since $\mathbb{R}^\omega/\ell_\alpha$ and $\mathbf{E}_{\rm Id^\alpha}$  are Borel equivalent, therefore $\mathbb{R}^\omega/\ell_\alpha \le_B \mathbf{E}_{f_U}$.

If $U=\omega$, by the definition of $(u_U(n))_{n<\omega}$, we know that  $u_\omega(0)=1$ and for $n>0$,
\begin{equation*}
u_\omega(n)=
\begin{cases}
\delta_m u_\omega(n-1), & n=k_m,\\
u_\omega(n-1), & \text{otherwise}.
\end{cases}
\end{equation*}

Let $F_\omega: \mathbb{R}^+\to \mathbb{R}^+$ with $F_\omega(0)=u_\omega(0)$ and $F_\omega(k_m)=u_\omega(k_m)$ for each $m<\omega$, then extend $F_\omega$ to $ \mathbb{R}^+$ to be a continuous decreasing function which is affine on $[0, k_0]$ and $[k_m, k_{m+1}]$ for each $m<\omega$.

Let $\psi_\omega: [0,1] \to \mathbb{R}^+$ with $\psi_\omega(1/2^n)=F_\omega(n)$  for each $n<\omega$, and then extend $\psi_\omega$ to $[0,1]$ to be a continuous increasing function on $[0,1]$.

Let $x_0=1$ and for $n>0$, set $x_n=1/2^{k_{n-1}}$, then $\varphi_\omega(x_{n+1}) \ge \delta \varphi_\omega(x_{n})$ for each $n<\omega$. Since $\varphi_\omega(x_{n})=\psi_\omega(x_{n})$ for each $n<\omega$, by Proposition \ref{funseq1}, we have $(\varphi_\omega(x))_{x\in(0,1]}\approx(\psi_\omega(x))_{x\in(0,1]}$. Since $\varphi_\omega, \psi_\omega$ are continuous increasing functions on $[0,1]$ and they are equal at $x_n$ for each $n<\omega$, therefore $\varphi_\omega(0)=\psi_\omega(0)$ and $(\varphi_\omega(x))_{x\in[0,1]}\approx(\psi_\omega(x))_{x\in[0,1]}$. Let $ g_\omega(x)=x^\alpha\psi_\omega(x)$ for $x\in [0,1]$, then $\mathbf{E}_{f_\omega}=\mathbf{E}_{g_\omega}$.

For $n \in [k_m, k_{m+1})$,
\[
\psi_\omega(1/2^n)-\psi_\omega(1/2^{n+1})=F(n)-F(n+1)=(F(k_m)-F(k_{m+1}))/(k_{m+1}-k_m),
\]
therefore
\[
 \frac{ \psi_\omega(1/2^n)-\psi_\omega(1/2^{n+1})}{\psi_\omega(1/2^{n+1})} \le \frac{u_\omega(k_m)-u_\omega(k_{m+1})}{u_\omega(k_{m+1})(k_{m+1}-k_m)}=\frac{1/\delta_{m+1}-1}{k_{m+1}-k_m}\le \frac{1/\delta_{m+1}-1}{k_m},
\]
since $0<1/\delta_{m+1}-1 \le 1/\delta-1$ for each $m<\omega$ and $k_m \to \infty$ as $m \to \infty$, we have $\lim_{n\to \infty} (\psi_\omega(1/2^n)-\psi_\omega(1/2^{n+1}))/\psi_\omega(1/2^{n+1})=0$, therefore,
\[
\lim_{n\to \infty} \psi_\omega(1/2^n)/\psi_\omega(1/2^{n+1})=1,
 \]
 using Lemma \ref{lb} we get that $\mathbf{E}_{g_\omega} \le_B \mathbf{E}_{\rm Id^\beta}$, therefore $\mathbf{E}_{f_\omega} \le_B \mathbb{R}^\omega/\ell_\beta$.
 \end{proof}

Theorem \ref{embedEf} and Lemma \ref{embedlb} give the following   main theorem.
\begin{theorem}\label{embedlplq}
For $1 \le p <\infty$ and $U \in P(\omega)$, there is a Borel equivalence relation $E_U$ such that  for any $q>p$, $\mathbb{R}^\omega/\ell_p \le_B E_U \le_B \mathbb{R}^\omega/\ell_q$ and for $U, V \in P(\omega)$, we have
\[
U \subseteq^{*}V \Longleftrightarrow E_U \le_B E_V.
\]
\end{theorem}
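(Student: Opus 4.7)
The plan is to combine Theorem \ref{embedEf} and Lemma \ref{embedlb} directly, since the $E_U$ promised by the main theorem is essentially forced to be the $\mathbf{E}_{f_U}$ already constructed there. First I would fix once and for all the auxiliary data used in both results: a sequence $(\delta_m)_{m<\omega} \subseteq (0,1)$ with $0 < \inf_m \delta_m \le \sup_m \delta_m < 1$, an increasing sequence $(k_m)_{m<\omega}$ on $\omega \setminus \{0,1\}$ with $k_{m+1} \ge 2k_m$, and the block partition $I_l = [a_l, a_{l+1}) \cap \omega$ defined from $a_0 = 0$ and $a_l = a_{l-1} + 1 + (l-1)a_{l-1}$. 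For each $U \in P(\omega)$ this determines the sequence $(u_U(n))_{n<\omega}$, and I let $\varphi_U : [0,1] \to \mathbb{R}^+$ be the continuous increasing interpolation with $\varphi_U(1/2^n) = u_U(n)$. I then take $E_U = \mathbf{E}_{f_U}$ with $f_U(x) = x^p \varphi_U(x)$ for $x \in [0,1]$.

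For the sandwich $\mathbb{R}^\omega/\ell_p \le_B E_U \le_B \mathbb{R}^\omega/\ell_q$, I would invoke Lemma \ref{embedlb} directly with $\alpha = p$ and $\beta = q$, which is valid for every $q > p$. This also ensures that $E_U$ is a Borel equivalence relation, since $f_U$ is Borel and the reductions through $\mathbb{R}^\omega/\ell_p$ and $\mathbb{R}^\omega/\ell_q$ are available. For the biconditional, I would apply Theorem \ref{embedEf} with $\alpha = p$ and the trivial choice $\varphi \equiv 1$. This $\varphi$ is continuous, essentially increasing with constant $1$, positive on $(0,1]$, and satisfies $\varphi(x^2) = 1 = \lambda \varphi(x)$ with $\lambda = 1$, so all hypotheses of Theorem \ref{embedEf} are met. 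Under this choice the expression $x^\alpha \varphi(x) \varphi_U(x)$ from Theorem \ref{embedEf} collapses to $x^p \varphi_U(x) = f_U(x)$, so the $\mathbf{E}_{f_U}$ produced by that theorem is exactly our $E_U$, and the biconditional $U \subseteq^* V \iff E_U \le_B E_V$ is immediate from its conclusion.

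The genuine work is already packaged inside Theorem \ref{embedEf} (the three cases: stability of $\mathbf{E}_{f_U}$ when $U$ and $V$ agree mod finite, the reduction $\mathbf{E}_{f_U} \le_B \mathbf{E}_{f_V}$ for $U \subseteq^* V$ via Lemma \ref{redu1} and the carefully chosen $\mu$, and the incomparability from Theorem \ref{inredu} using alternating tails in $U \setminus V$ and $V \setminus U$) and inside Lemma \ref{embedlb} (the $\ell_p$ lower bound from the case $U = \emptyset$ and the $\ell_q$ upper bound obtained via Lemma \ref{lb} applied to the regularized $\psi_\omega$). The only remaining obstacle is a bookkeeping check that the $f_U$ of Theorem \ref{embedEf} agrees with that of Lemma \ref{embedlb} under the specialization $\varphi \equiv 1$, which is immediate, and that the indexing $\alpha \mapsto p$ is compatible in both invocations.
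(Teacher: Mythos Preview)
Your proposal is correct and matches the paper's approach exactly: the paper simply states that Theorem \ref{embedEf} and Lemma \ref{embedlb} together yield the main theorem, and your write-up spells out precisely this combination (with $\alpha=p$ and $\varphi\equiv 1$ in Theorem \ref{embedEf}). The bookkeeping you flag---that the $f_U$ in both results coincide under this specialization---is the only thing to check, and it is immediate.
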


 It is well known that every  Boolean algebra of size $\le \omega_1$ embeds into $P(\omega)/{\rm Fin}$. So under  CH (the continuum hypothesis), every partially ordered set of size at most continuum embeds into $P(\omega)/{\rm Fin}$, which implies  $P(\omega)/{\rm Fin}$ is the most complicated partially ordered set of  size at most continuum. On the other hand, we know that there is an antichain of size continuum in $P(\omega)/{\rm Fin}$ under ZFC, therefore there are continuum many incomparable Borel equivalence relations between $\mathbb{R}^\omega/\ell_p$ and $\mathbb{R}^\omega/\ell_q$ for $1 \le p <q <\infty$.
\begin{corollary}
For  $1 \le p <q<\infty$, there is a set of Borel equivalence relations
\[
\{\mathbf{E}_\xi:  \xi \in \{0,1\}^\omega\}
\]
such that $\mathbb{R}^\omega/\ell_p \le_B \mathbf{E}_{\xi} \le_B \mathbb{R}^\omega/\ell_q$, and for distinct $\xi,\zeta \in \{0,1\}^\omega$, we have $\mathbf{E}_{\xi}$ and $\mathbf{E}_{\zeta}$ are Borel incomparable.
\end{corollary}

A direct proof to show there are continuum many incomparable Borel equivalence relations between $\mathbb{R}^\omega/\ell_1$ and $\mathbb{R}^\omega/\ell_p$ for $p >1$ can be founded in \cite{DY}.

\section{Further remarks}

In this section we will compare the examples of equivalence relations in \cite{Ma} with what  we have constructed in the last section .

In \cite{Ma}, for $n<\omega$,  M\'atrai defined $t_n: (0,1] \to \mathbb{R}^+$ by:
\[
t_0(x)=1-\log_2(x), t_{n+1}(x)=1+\log_2(t_n(x)),
\]
for  $\eta \in [0,1)^{<\omega}$, denote $|\eta|$ the length of $\eta$,  let  $l_\eta: (0,1] \to \mathbb{R}^+$ defined by:
\[
l_\emptyset(x)=1, ~~l_\eta(x)=\prod_{i<|\eta|}t_i^{\eta_i}.
\]
For $\eta\neq \emptyset$, set $1/l_\eta(0)=0$, then $1/l_\eta$ is a continuous strictly increasing function on $[0,1]$ with $1/l_\eta(1)=1$.   M\'atrai showed  that  $(l_\eta(x^2))_{x \in (0,1]} \approx (l_\eta(x))_{x\in (0,1]}$ and for $\alpha \ge 1 $, $\mathbf{E}_{{\rm Id^\alpha}/l_\eta}$ is an equivalence relation.

Let $p_0=2$ and for $0<n<\omega$, $p_n=2^{p_{n-1}} $.
For $n<\omega$, let $s_n: (0,1/p_n] \to \mathbb{R}^+$ defined by:
\[
s_0(x)=-\log_2(x), s_{n+1}(x)=\log_2(s_n(x)),
\]
then extend $s_n$ to $(0, 1]$ by define $s_n(x)=s_n(1/p_n)$ for $x \ge 1/p_n$. We can show that  $s_n(x)\ge 1$ for $x \in (0,1]$ and $s_n(1/p_n)=1$ for $n<\omega$. For  $\eta \in [0,1)^{<\omega}$,  let  $l'_\eta: (0,1] \to \mathbb{R}^+$ defined by:
\[
l'_\emptyset(x)\equiv 1, ~~l'_\eta(x)=\prod_{i<|\eta|}s_i^{\eta_i}(x).
\]
Since $\lim_{x\to 0}l_\eta(x)/l'_\eta(x)=1$ and $l_\eta, l'_\eta$ are continuous on $[0,1]$ with $l_\eta(x), l'_\eta(x)>0$ for $x>0$, therefore $(l_\eta(x))_{x\in (0,1]} \approx (l'_\eta(x))_{x \in (0,1]}$. From $(l_\eta(x^2))_{x \in (0,1]} \approx (l_\eta(x))_{x\in (0,1]}$, we get $(l'_\eta(x^2))_{x \in (0,1]} \approx (l'_\eta(x))_{x\in (0,1]}$.
 For $\eta\neq \emptyset$,  set $1/l'_\eta(0)=0$, then  $1/l'_\eta$ is a continuous strictly  increasing function on $[0,1/2]$ with $1/l'_\eta(x)=1$ for $x \ge 1/2$.

Let $k_0(m)=2^{m+1}$ for each $m<\omega$ and for $i<\omega$, let
$ k_{i+1}(m)=2^{k_{i}(m)}$.

For $\emptyset \neq \eta \in [0,1)^{<\omega}$, let $j_0$ to be the minimum natural number with $\eta_{j_0} \neq 0$, set $k_m^\eta=k_{j_0}(m)$ for each $m<\omega$,  set $\delta_0^\eta=1/l'_\eta(1/2^{k_0^\eta})<1 $ and $\delta_m^\eta=l'_\eta(1/2^{k_{m-1}^\eta})/l'_\eta(1/2^{k_m^\eta})<1$ for $m>0$. Since
\[
\delta_m^\eta=\frac{(2^m)^{\eta_{j_0}}(m)^{\eta_{j_0+1}}\cdots (s_{|\eta|-1}(1/2^{k_{m-1}^\eta}))^{\eta_{|\eta|-1}}}{(2^{m+1})^{\eta_{j_0}}(m+1)^{\eta_{j_0+1}}\cdots (s_{|\eta|-1}(1/2^{k_{m}^\eta}))^{\eta_{|\eta|-1}}}
\]
for $m$ large enough, therefore $\lim_{m \to \infty}\delta_m^\eta=(1/2)^{\eta_{j_0}}$ and $0<\inf_m \delta_m^\eta \le \sup_m \delta_m^\eta <1$.
 Define $u_n^\eta$  by $u_0^\eta=1$ and for $n>0$,
\begin{equation*}
u_n^\eta=
\begin{cases}
\delta_m^\eta u_{n-1}^\eta, & n=k_m^\eta,\\
u_{n-1}^\eta, & {\rm otherwise}.
\end{cases}
\end{equation*}

Let $\varphi_\eta: [0,1] \to \mathbb{R}^+$ be continuous increasing with $\varphi_\eta(1/2^n)=u_n^\eta$. Let $x_0=1$ and for $n>0$, set $x_n=1/2^{k_{n-1}^\eta}$, then $\varphi_\eta(x_n)=1/l'_\eta(x_n)$ and $\varphi_\eta(x_{n+1}) \ge (\inf_m \delta_m^\eta) \varphi_\eta(x_{n})$ for each $n<\omega$, by Proposition \ref{funseq1}, we have $(\varphi_\eta(x))_{x\in(0,1]} \approx (1/l'_\eta(x))_{x\in(0,1]}$. Since $\varphi_\eta, 1/l'_\eta$ are continuous increasing on $[0,1]$ and they are equal at $x_n$ for each $n<\omega$, we have $\varphi_\eta(0)=1/l'_\eta(0)$, therefore $(\varphi_\eta(x))_{x\in[0,1]} \approx (1/l'_\eta(x))_{x\in[0,1]}\approx (1/l_\eta(x))_{x\in[0,1]}$, which implies $\mathbf{E}_{{\rm Id^\alpha}/l_\eta}$ belong to the equivalence relations we have constructed in the last section.

 Let $<_{lex}$ denote the lexicographical order, M\'atrai gave the following lemma.
\begin{lemma}[M\'{a}trai  \cite{Ma}, Corollary 30] \label{leta}
For every $1 \le \alpha <\beta<\infty$ and $\eta, \eta' \in [0,1)^{<\omega}$ with $\eta<_{lex} \eta'$,
\[
\mathbb{R}^\omega/\ell_\alpha <_B \mathbf{E}_{{\rm Id^\alpha}/l_\eta}<_B \mathbf{E}_{{\rm Id^\alpha}/l_{\eta'}}<_B \mathbb{R}^\omega/\ell_\beta.
\]
\end{lemma}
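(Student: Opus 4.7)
The plan is to prove the three strict reducibilities by combining the reducibility machinery of Sections 3--4 (Theorem \ref{redu}, Lemma \ref{redu1}, Lemma \ref{lb}) with the nonreducibility Theorem \ref{inredu}. The paragraph immediately preceding the lemma shows $\mathbf{E}_{{\rm Id}^\alpha/l_\eta}=\mathbf{E}_{x^\alpha\varphi_\eta(x)}$, where $\varphi_\eta$ comes from the sequence $(u_n^\eta)$ built by Lemma \ref{un} using parameters $(k_m^\eta),(\delta_m^\eta)$ that satisfy $0<\inf_m\delta_m^\eta\le\sup_m\delta_m^\eta<1$ and $k_{m+1}^\eta\ge 2k_m^\eta$. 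In particular $\varphi_\eta$ is bounded, essentially increasing, and $\varphi_\eta(1/2^{2n})\ge\delta\varphi_\eta(1/2^n)$, so Lemma \ref{A1} gives that each $\mathbf{E}_{{\rm Id}^\alpha/l_\eta}$ is an equivalence relation and that $\varphi_\eta$ verifies condition $(A_1)$ of Theorem \ref{inredu}.

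All three nonreducibility halves follow from Theorem \ref{inredu} with its weakened hypothesis $(A_2)'$. For $\mathbf{E}_{{\rm Id}^\alpha/l_\eta}\not\le_B\mathbb{R}^\omega/\ell_\alpha$, take $\varphi\equiv 1,\psi=\varphi_\eta$; the required $\liminf_n \varphi_\eta(1/2^n)=0$ is Lemma \ref{un}(ii). For $\mathbf{E}_{{\rm Id}^\alpha/l_{\eta'}}\not\le_B\mathbf{E}_{{\rm Id}^\alpha/l_\eta}$, take $\varphi=\varphi_\eta,\psi=\varphi_{\eta'}$; letting $j$ be the first index with $\eta_j<\eta'_j$, the factor $t_j(1/2^n)^{\eta'_j-\eta_j}\to\infty$ dominates the ratio $l_{\eta'}/l_\eta$, so $\varphi_{\eta'}(1/2^n)/\varphi_\eta(1/2^n)\approx l_\eta(1/2^n)/l_{\eta'}(1/2^n)\to 0$. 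For $\mathbb{R}^\omega/\ell_\beta\not\le_B\mathbf{E}_{{\rm Id}^\alpha/l_{\eta'}}$, take $\varphi=\varphi_{\eta'},\psi(x)=x^{\beta-\alpha}$; the ratio $2^{-n(\beta-\alpha)}/\varphi_{\eta'}(1/2^n)$ tends to $0$ because $\varphi_{\eta'}(1/2^n)$ decays only polynomially in $n$ while $2^{-n(\beta-\alpha)}$ decays exponentially.

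For the reducibilities, the bottom inequality $\mathbb{R}^\omega/\ell_\alpha\le_B\mathbf{E}_{{\rm Id}^\alpha/l_\eta}$ follows from Theorem \ref{embedEf} applied with $\varphi\equiv 1$ and the sequences $(k_m^\eta),(\delta_m^\eta)$: identifying $\mathbf{E}_{f_\emptyset}=\mathbf{E}_{{\rm Id}^\alpha}$ and $\mathbf{E}_{f_\omega}=\mathbf{E}_{{\rm Id}^\alpha/l_\eta}$, the relation $\emptyset\subseteq^*\omega$ gives the desired reduction. The top inequality $\mathbf{E}_{{\rm Id}^\alpha/l_{\eta'}}\le_B\mathbb{R}^\omega/\ell_\beta$ follows from Lemma \ref{lb} applied to a piecewise-affine interpolation $\psi_{\eta'}$ of $\varphi_{\eta'}$ along $(k_m^{\eta'})$, exactly as in the $U=\omega$ case of Lemma \ref{embedlb}: the estimate $(\psi_{\eta'}(1/2^n)-\psi_{\eta'}(1/2^{n+1}))/\psi_{\eta'}(1/2^{n+1})\le (1/\delta-1)/k_m^{\eta'}$ for $n\in[k_m^{\eta'},k_{m+1}^{\eta'})$ forces $\lim_n\psi_{\eta'}(1/2^n)/\psi_{\eta'}(1/2^{n+1})=1$.

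The middle reducibility $\mathbf{E}_{{\rm Id}^\alpha/l_\eta}\le_B\mathbf{E}_{{\rm Id}^\alpha/l_{\eta'}}$ is the main obstacle, since the jump sets $\{k_m^\eta\}$ and $\{k_m^{\eta'}\}$ are in general distinct sparse sequences with different base indices $j_0(\eta),j_0(\eta')$. I would mimic Case 2 of the proof of Theorem \ref{embedEf} by setting $\mu(0)=1$ and
\[
\mu(n)^\alpha=\frac{\varphi_\eta(1/2^n)}{\varphi_{\eta'}(1/2^n)}-\frac{\varphi_\eta(1/2^{n-1})}{\varphi_{\eta'}(1/2^{n-1})}\quad(n>0),
\]
so that the telescoping identity $\varphi_\eta(1/2^n)=\varphi_{\eta'}(1/2^n)\sum_{i\le n}\mu(i)^\alpha$ holds and \eqref{apphold1} of Proposition \ref{apphold} is automatic. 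The key technical estimates to be verified, in direct analogy with Claims 1--3 of Theorem \ref{embedEf}, are: $0\le\mu(n)\le 2^{\varepsilon n}$ for large $n$ (using the polynomial growth of the ratio $\varphi_\eta/\varphi_{\eta'}$, controlled by the sparsity of jumps of $\varphi_{\eta'}$); condition \eqref{changemu2} via Corollary \ref{changemu2holdrem}; the tail bound \eqref{redu12} obtained by splitting at the next jump of $\varphi_{\eta'}$ past $n$ so that $\varphi_\eta(1/2^{n+m})/\varphi_{\eta'}(1/2^{n+m})$ is comparable with $\varphi_\eta(1/2^n)/\varphi_{\eta'}(1/2^n)$; and the essential-increase bound \eqref{redu13} from comparing consecutive nonzero values of $\mu$. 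Once these are in hand, Lemma \ref{redu1} delivers the reduction and completes the chain.
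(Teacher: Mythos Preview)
Your treatment of the three nonreducibilities via Theorem~\ref{inredu} and of the bottom and top reducibilities via Theorem~\ref{embedEf} and Lemma~\ref{lb} is essentially the paper's argument. (The paper in fact gets strictness at the top for free: once $\mathbf{E}_{{\rm Id}^\alpha/l_{\eta'}}<_B\mathbf{E}_{{\rm Id}^\alpha/l_{\eta''}}\le_B\mathbb{R}^\omega/\ell_\beta$ for any $\eta''>_{lex}\eta'$, no separate $(A_2)'$ computation is needed.)

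The divergence is in the middle reducibility, and here your outline has a real gap. The paper does \emph{not} replay Case~2 of Theorem~\ref{embedEf} with the two step functions $\varphi_\eta,\varphi_{\eta'}$. Instead it factors $1/l'_{\eta'}=(1/l'_\eta)\cdot(l'_\eta/l'_{\eta'})$, shows the quotient $l'_\eta/l'_{\eta'}$ is eventually strictly increasing, and realizes this quotient as a $\varphi_\omega$-function for a \emph{single} new parameter system $(k_m,\delta_m)$ built from the first-difference index. Theorem~\ref{embedEf} then applies as a black box with base $\varphi=1/l'_\eta$: the inclusion $\emptyset\subsetneq^*\omega$ yields $\mathbf{E}_{{\rm Id}^\alpha/l'_\eta}=\mathbf{E}_{f_\emptyset}<_B\mathbf{E}_{f_\omega}=\mathbf{E}_{{\rm Id}^\alpha/l'_{\eta'}}$ with no claims to re-verify.

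Your direct route cannot be completed by ``direct analogy with Claims~1--3''. In Case~2 the nonnegativity $\mu(n)\ge 0$ holds because $\varphi_U,\varphi_V$ come from the \emph{same} $(k_m,\delta_m)$ with $U\subseteq V$: whenever $u_U$ jumps, $u_V$ jumps by the identical factor $\delta_m$, so $u_U(n)/u_V(n)$ is nondecreasing. Your $\varphi_\eta,\varphi_{\eta'}$ use \emph{different} systems $(k_m^\eta,\delta_m^\eta)$ and $(k_m^{\eta'},\delta_m^{\eta'})$; at a common jump the drop factors need not satisfy $\delta_m^\eta\ge\delta_{m'}^{\eta'}$, and then $\mu(n)^\alpha<0$. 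Concretely, for $\eta=(0.5,\,0.9)$ and $\eta'=(0.6)$ one has $j_0(\eta)=j_0(\eta')=0$, identical jump points $k_m=2^{m+1}$, and $\delta_m^\eta/\delta_m^{\eta'}=2^{0.1}(m/(m+1))^{0.9}<1$ for small $m$; equivalently the ratio $l'_{\eta'}(1/2^n)/l'_\eta(1/2^n)=n^{0.1}/(\log_2 n)^{0.9}$ is initially decreasing. This is repairable---the ratio is \emph{eventually} monotone and Proposition~\ref{changemu} absorbs finitely many bad $n$'s---but that eventual monotonicity is an extra argument you have not supplied, and the same monotonicity is used again in the analogue of Claim~3(i). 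The paper's factorization trick bypasses all of this by reducing to the $\emptyset\subseteq\omega$ case of a single parameter system.
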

We would like to give a different proof here.

\begin{proof}
 From the analysis above and  Theorem \ref{embedEf}, we know that $\mathbb{R}^\omega/\ell_\alpha <_B \mathbf{E}_{{\rm Id^\alpha}/l_\eta}$. Since $\eta<_{lex} \eta'$, set $j_0$ to be the minimum $j$ with $\eta(j)<\eta'(j)$(if $|\eta|<|\eta'|$, we can extend $\eta$ to length $|\eta'|$ by setting $\eta(j)=0$ for $j \ge |\eta|$, we do the same if  $|\eta'|<|\eta|$). Let $f(x)=x^\alpha/l'_\eta(x)$ and $g(x)=x^\alpha/l'_\eta(x)\cdot l'_\eta(x)/l'_{\eta'(x)}$ for $x\in[0,1]$. For $i, m<\omega$, let $k_i(m)$ defined as above. Since
\[
l'_\eta(x)/l'_{\eta'(x)}=\frac{1}{(s_{j_0}(x))^{\eta'(j_0)-\eta(j_0)} \cdots }
\]
and $\lim_{x\to 0}l'_\eta(x)/l'_{\eta'(x)}=0$, there is $m_0<\omega$ such that for $x \le 1/2^{k_{j_0}(m_0)}$, $l'_\eta(x)/l'_{\eta'(x)}$ is strictly increasing with $l'_\eta(x)/l'_{\eta'(x)} \le 1/2$. For $x>0$, $l'_\eta(x)/l'_{\eta'(x)}>0$, therefore $l'_\eta(x)/l'_{\eta'(x)}$ is essentially increasing on $[0,1]$. Let $k_m= k_{j_0}(m_0+m)$ for each $m<\omega$.  Set $\delta_0=l'_\eta(1/2^{k_0})/l'_{\eta'}(1/2^{k_0})<1 $ and for $m>0$,
\[
\delta_m=l'_\eta(1/2^{k_m})l'_{\eta'}(1/2^{k_{m-1}})/l'_\eta(1/2^{k_{m-1}})l'_{\eta'}(1/2^{k_m})<1.
\]

It is easy to check that $\lim_{m \to \infty}\delta_m=(1/2)^{\eta'_{j_0}-\eta_{j_0}}$ and $0<\inf_m \delta_m \le \sup_m \delta_m <1$.
 Define $u_n$  by $u_0=1$ and for $n>0$,
\begin{equation*}
u_n=
\begin{cases}
\delta_m u_{n-1}, & n=k_m,\\
u_{n-1}, & {\rm otherwise}.
\end{cases}
\end{equation*}

Let $\varphi: [0,1] \to \mathbb{R}^+$ be continuous increasing with $\varphi(1/2^n)=u_n$. From Theorem \ref{embedEf}, we have $\mathbf{E}_{{\rm Id^\alpha}/l'_\eta}<_B\mathbf{E}_{{\rm Id^\alpha}\varphi/l'_\eta}$. Let $x_0=1$ and for $n>0$, set $x_n=1/2^{k_{n-1}}$, then $\varphi(x_n)=l'_\eta(x_n)/l'_{\eta'(x_n)}$ and   $\varphi(x_{n+1}) \ge (\inf_m \delta_m^\eta) \varphi(x_{n})$ for each $n<\omega$. By Proposition \ref{funseq1}, we have $(\varphi(x))_{x\in(0,1]} \approx (l'_\eta(x)/l'_{\eta'(x)})_{x\in(0,1]}$, since $\varphi, l'_\eta/l'_{\eta'}$ are continuous increasing on $[0, 1/2^{k_0}]$ and they are equal at $x_n$ for $0<n<\omega$, then $\varphi(0)=l'_\eta(0)/l'_{\eta'(0)}$ and $(\varphi(x))_{x\in[0,1]} \approx (l'_\eta(x)/l'_{\eta'(x)})_{x\in[0,1]}$, therefore $\mathbf{E}_{{\rm Id^\alpha}/l_\eta}=\mathbf{E}_{{\rm Id^\alpha}/l'_\eta}<_B\mathbf{E}_{{\rm Id^\alpha}/l'_{\eta'}}=\mathbf{E}_{{\rm Id^\alpha}/l_{\eta'}}$.

Since $\lim_{n\to \infty} \ell_\eta(1/2^{n})/\ell_\eta(1/2^{n+1})=1$,  using Lemma \ref{lb} we get $\mathbf{E}_{{\rm Id^\alpha}/l_\eta}\le_B \mathbb{R}^\omega/\ell_\beta$. As $\mathbf{E}_{{\rm Id^\alpha}/l_\eta}<_B \mathbf{E}_{{\rm Id^\alpha}/l_{\eta'}}$ for  $\eta<_{lex} \eta'$, it follows that the reduction is strictly.
\end{proof}

In the end, for $\eta, \eta' \in [0,1)^{<\omega}$ with $\eta<_{lex} \eta'$, we show that $P(\omega)/{\rm Fin}$ can be embedded into Borel equivalence relations between
$\mathbf{E}_{{\rm Id^\alpha}/l_\eta}$ and $\mathbf{E}_{{\rm Id^\alpha}/l_{\eta'}}$.

For $l<\omega$, let $a_l, I_l$ are defined as in Theorem \ref{embedEf}. For $i, m<\omega$, let $k_i(m)$ defined as above. Set $j_0$ to be the minimum $j$ with $\eta(j)<\eta'(j)$. Let $k_m=k_{j_0}(m)$ for each $m<\omega$ and $\delta<1$ satisfying $\delta>2^{\eta(j_0)-\eta'(j_0)}$, then $\log_2 1/\delta < \eta'(j_0)- \eta(j_0)$, for every $U \in P(\omega)$, set $u_U(0)=1$ and for each $n>0$, define $u_U(n)$ by:
\begin{equation*}
u_U(n)=
\begin{cases}
\delta u_U(n-1), & n=k_m, m \in I_l, l \in U,\\
u_U(n-1), & {\rm otherwise}.
\end{cases}
\end{equation*}
Then we have the following Theorem. This theorem together with Lemma \ref{leta} give a concrete description about Theorem \ref{embedlplq}.
\begin{theorem}
Let $\alpha \ge 1$ and $\eta, \eta' \in [0,1)^{<\omega}$ with $\eta<_{lex} \eta{'}$, for  each $U \in P(\omega)$ and $n< \omega$, let $u_U(n), I_n, a_n, k_n, \delta$ are defined as above. Let $\varphi_U: [0,1] \to \mathbb{R}^+$ be  continuous increasing with  $\varphi_U(1/2^n)=u_U(n)$ for each $n<\omega$. Set $f_U(x)=x^\alpha\varphi_U(x)/l_\eta(x)$ for $x\in [0,1]$. Then for each $U, V \in P(\omega)$,
\[
U\subseteq^{*}V\Longleftrightarrow \mathbf{E}_{f_U} \le_B \mathbf{E}_{f_V}
\]
and
\[
\mathbf{E}_{{\rm Id^\alpha}/l_\eta} \le_B \mathbf{E}_{f_U}<_B \mathbf{E}_{{\rm Id^\alpha}/l_{\eta'}}.
\]
\end{theorem}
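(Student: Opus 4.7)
The plan is to identify this setup as an instance of Theorem~\ref{embedEf} with base function $\varphi(x)=1/l_\eta(x)$, and then derive the two bracketing inequalities from that framework together with Theorem~\ref{inredu} and Lemma~\ref{redu1}. First I would verify the hypotheses of Theorem~\ref{embedEf}: $1/l_\eta$ is continuous and strictly increasing on $[0,1]$ with $1/l_\eta(x)>0$ for $x>0$; the estimate $\varphi(x^2)\ge\lambda\varphi(x)$ is immediate from $(l_\eta(x^2))_{x\in(0,1]}\approx(l_\eta(x))_{x\in(0,1]}$, which is recalled earlier in this section; and the constant choice $\delta_m\equiv\delta\in(0,1)$ together with $k_m=k_{j_0}(m)$ satisfies $0<\inf\delta_m\le\sup\delta_m<1$ and $k_{m+1}\ge 2k_m$. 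This directly yields the biconditional $U\subseteq^{*}V\Longleftrightarrow\mathbf{E}_{f_U}\le_B\mathbf{E}_{f_V}$. The lower bound $\mathbf{E}_{{\rm Id}^\alpha/l_\eta}\le_B\mathbf{E}_{f_U}$ then follows because $u_\emptyset(n)\equiv 1$, so continuity and monotonicity force $\varphi_\emptyset\equiv 1$ and hence $f_\emptyset={\rm Id}^\alpha/l_\eta$; since $\emptyset\subseteq^{*}U$ the biconditional delivers the bound.

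For the upper bound $\mathbf{E}_{f_U}<_B\mathbf{E}_{{\rm Id}^\alpha/l_{\eta'}}$, the biconditional also gives $\mathbf{E}_{f_U}\le_B\mathbf{E}_{f_\omega}$, so it suffices to prove $\mathbf{E}_{f_\omega}\le_B\mathbf{E}_{{\rm Id}^\alpha/l_{\eta'}}$ and $\mathbf{E}_{{\rm Id}^\alpha/l_{\eta'}}\nleq_B\mathbf{E}_{f_\omega}$. The non-reducibility I would handle by Theorem~\ref{inredu} with $\varphi(x)=\varphi_\omega(x)/l_\eta(x)$ and $\psi(x)=1/l_{\eta'}(x)$: condition $({\rm A}_1)$ is supplied by Lemma~\ref{A1} (essential monotonicity of $\varphi_\omega/l_\eta$ and its growth bound inherited from the ingredients), and condition $({\rm A}_2)'$ is checked along the subsequence $n=k_m$, where
\[
\frac{\psi(1/2^{k_m})}{\varphi(1/2^{k_m})}=\frac{l_\eta(1/2^{k_m})}{l_{\eta'}(1/2^{k_m})}\cdot\frac{1}{\varphi_\omega(1/2^{k_m})}\sim\bigl(\delta\cdot 2^{\eta'(j_0)-\eta(j_0)}\bigr)^{-m}\cdot\text{polylog},
\]
and the hypothesis $\log_2(1/\delta)<\eta'(j_0)-\eta(j_0)$, i.e.\ $\delta\cdot 2^{\eta'(j_0)-\eta(j_0)}>1$, forces this ratio to $0$.

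The main technical obstacle is the reducibility $\mathbf{E}_{f_\omega}\le_B\mathbf{E}_{{\rm Id}^\alpha/l_{\eta'}}$, which I plan to establish through Lemma~\ref{redu1} with $\varphi(x)=\varphi_\omega(x)/l_\eta(x)$ and $\psi(x)=1/l_{\eta'}(x)$. Setting $S(n)=\varphi_\omega(1/2^n)l_{\eta'}(1/2^n)/l_\eta(1/2^n)$ and $\tilde S(n)=\max_{n'\le n}S(n')$, the inequality $\delta\cdot 2^{\eta'(j_0)-\eta(j_0)}>1$ makes $S$ grow geometrically along the subsequence $k_m$, so $\tilde S\approx S$ within a factor $1/\delta$; I would then define $\mu(n)^\alpha=\tilde S(n)-\tilde S(n-1)\ge 0$. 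The partial sums satisfy $\sum_{i\le n}\mu(i)^\alpha\approx S(n)$, and combining this with Propositions~\ref{apphold}, \ref{changemu} and \ref{changemu2hold} applied to the slowly-varying $l_{\eta'}$ produces the approximation \eqref{redu11}. The bound $\mu(n)\le 2^n$ holds for large $n$ since $\tilde S(n)$ grows only polylogarithmically, while Proposition~\ref{changemu} absorbs the finitely many exceptional indices. The hardest step will be verifying \eqref{redu12}: dominating the tail $\sum_{i\ge 0}\mu(n+i)^\alpha\psi(\mu(n+i)/2^{n+i})/2^{i\alpha}$ by a constant multiple of the partial sum up to $n$ requires tracking both the exponential rate of $\tilde S$ along $k_m$ and the slow variation of $l_{\eta'}$ at the scales $\mu(i)/2^n$. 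Condition \eqref{redu13} is then comparatively routine once the ratios $\mu(n+1)^\alpha/\mu(n)^\alpha$ are known to be controlled by the geometric growth rate just established.
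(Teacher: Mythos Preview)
Your treatment of the biconditional and the lower bound matches the paper exactly: both invoke Theorem~\ref{embedEf} with base function $\varphi=1/l_\eta$ and read off $f_\emptyset={\rm Id}^\alpha/l_\eta$.

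For the strict upper bound the paper takes a much shorter route than your direct attack via Lemma~\ref{redu1} and Theorem~\ref{inredu}. It first computes $\varphi_\omega(1/2^{k_m})=\delta^{m+1}$ and, using that $s_{j_0}(1/2^{k_m})=2^{m+1}$, recognizes (through Proposition~\ref{funseq1}) that
\[
(\varphi_\omega(x))_{x\in[0,1]}\approx\bigl(1/t_{j_0}(x)^{\log_2(1/\delta)}\bigr)_{x\in[0,1]}.
\]
Defining $\eta_0$ by $\eta_0(j_0)=\eta(j_0)+\log_2(1/\delta)$ and $\eta_0(j)=\eta(j)$ for $j\neq j_0$, this yields $\varphi_\omega/l_\eta\approx 1/l_{\eta_0}$, hence $\mathbf{E}_{f_\omega}=\mathbf{E}_{{\rm Id}^\alpha/l_{\eta_0}}$. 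The standing hypothesis $\delta>2^{\eta(j_0)-\eta'(j_0)}$ is precisely $\eta_0<_{lex}\eta'$, so Lemma~\ref{leta} gives $\mathbf{E}_{f_\omega}<_B\mathbf{E}_{{\rm Id}^\alpha/l_{\eta'}}$ in one stroke, covering both reducibility and strictness.

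Your plan is not wrong: the $({\rm A}_2)'$ computation you sketch is correct, and the construction of $\mu$ via $\tilde S(n)-\tilde S(n-1)$ can be pushed through. But the verification of \eqref{redu12} that you flag as the main obstacle essentially re-derives the reducibility half of Lemma~\ref{leta} in this special case; the paper's structural identification $f_\omega\approx{\rm Id}^\alpha/l_{\eta_0}$ is exactly the observation that lets one avoid that work and cite the already-established lexicographic ordering instead.
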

\begin{proof}
From Theorem \ref{embedEf}, we have $\mathbf{E}_{f_U}$ is an equivalence relation and for each $U, V \in P(\omega)$,
\[
U\subseteq^{*}V\Longleftrightarrow \mathbf{E}_{f_U} \le_B \mathbf{E}_{f_V}
\]
and
\[
\mathbf{E}_{{\rm Id^\alpha}\varphi_\emptyset/l_\eta}  \le_B \mathbf{E}_{f_U}\le_B \mathbf{E}_{{\rm Id^\alpha}\varphi_{\omega}/l_\eta}.
\]

If $U=\emptyset$, then $\varphi_U(1/2^n)=1$ for $n<\omega$, since $\varphi_U$ is continuous increasing in $[0,1]$, it follows $\varphi_\emptyset \equiv 1$.

If $U=\omega$, by Lemma \ref{un} (i), we have  $u_\omega(k_n)=\delta^{n+1}$ for $n<\omega$, therefore  $\varphi_\omega(1/2^{k_n})=\delta^{n+1}$.

Let $x_n=1/2^{k_n}$, then $n= s_{j_0+1}(x_n)-1$, therefore
\begin{align*}
\varphi_\omega(x_n)& =\delta^{n+1}=\delta^{s_{j_0+1}(x_n)-1+1}=2^{  s_{j_0+1}(x_n)\log_2\delta}\\
&=(s_{j_0}(x_n))^{\log_2\delta}=1/s_{j_0}(x_n)^{\log_2 (1/\delta)}.
\end{align*}
Since $\varphi_\omega(x_{n+1})=\delta\varphi_\omega(x_{n})$ and $\varphi_\omega(x_{0})=\delta \varphi_\omega(1)$, by Proposition \ref{funseq1} we have
$(\varphi_\omega(x))_{x\in(0,1]} \approx (1/s_{j_0}(x)^{\log_2 (1/\delta)})_{x\in(0,1]}$. Note that $\varphi_\omega, 1/s_{j_0}^{\log_2 (1/\delta)}$ are
continuous increasing functions and they are equal at $x_n$ for each $n<\omega$, therefore $\varphi_\omega(0)=1/s_{j_0}(0)^{\log_2 (1/\delta)}=0$ and $(\varphi_\omega(x))_{x\in[0,1]} \approx (1/s_{j_0}(x)^{\log_2 (1/\delta)})_{x\in[0,1]} \approx (1/t_{j_0}(x)^{\log_2 (1/\delta)})_{x\in[0,1]}$.

Since ${\rm Id^\alpha}/l_{\eta}= {\rm Id^\alpha}\varphi_\emptyset/l_{\eta}$, we have $\mathbf{E}_{{\rm Id^\alpha}/l_\eta}\le_B \mathbf{E}_{f_U}$ for each $U \in P(\omega)$.

Let $\eta_0 \in [0,1)^{<\omega}$ be the same length with $\eta$, set $\eta_0$ by $\eta_0(j)=\eta(j)$ for $j\neq j_0$, and $\eta_0(j_0)=\eta(j_0)+\log_2(1/\delta)$, since $\log_2 1/\delta < \eta'(j_0)- \eta(j_0)$ by the choice of $\delta$, we know  $\eta_0 <_{lex} \eta'$. Since $(\varphi_\omega(x)/l_\eta(x))_{x\in[0,1]} \approx (1/l_\eta(x) \cdot 1/t_{j_0}(x)^{\log_2(1/\delta)})_{x\in[0,1]}=(1/l_{\eta_0}(x))_{x\in[0,1]}$ and $\mathbf{E}_{{\rm Id^\alpha}/l_{\eta_0}}<_B \mathbf{E}_{{\rm Id^\alpha}/l_{\eta'}}$ by Corollary \ref{leta}, therefore
$\mathbf{E}_{f_U}\le_B \mathbf{E}_{{\rm Id^\alpha}/l_{\eta_0}}<_B \mathbf{E}_{{\rm Id^\alpha}/l_{\eta'}}$.
\end{proof}

\section{Acknowledgements}

This paper is the main part of my PhD thesis, I would like to express my gratitude to my supervisor Longyun Ding for his consistent guidance and encouragement on my research work. I should also thank Xin Ma, Minggang Yu and Yukun Zhang for discussions in seminars. Special thanks to Jialiang He for providing me an example of an antichain of size continuum in $P(\omega)/{\rm Fin}$.

\bibliographystyle{amsplain}

\end{document}